\def\SIAM{0}
\def\LUA{0}
\def\TRUE{1}
\def\FALSE{0}
\numberwithin{equation}{section}
\pgfplotsset{compat=1.18}
\newcommand{\rd}{\textcolor{red}}
\newcommand{\bl}{\textcolor{blue}}
\newcommand{\pu}{\textcolor{purple}}
\newtheorem{theorem}{Theorem}[section]
\newtheorem{lemma}[theorem]{Lemma}
\newtheorem{corollary}[theorem]{Corollary}
\theoremstyle{definition}
\theoremstyle{remark}\newtheorem{remark}[theorem]{Remark}
\theoremstyle{definition}\newtheorem{example}{Example}[section]
\theoremstyle{definition}\newtheorem{prob}{Problem}[section]
\newtheorem{assumption}{Assumption}
\crefname{prob}{Problem}{Problem}
\crefname{example}{Example}{Example}
\newcommand{\R}{\mathbb{R}}
\newcommand{\ud}{\,\mathrm{d}}
\newcommand{\bx}{\symbfup{x}}
\newcommand{\bn}{\symbfup{n}}
\newcommand{\bomega}{\symbf{\omega}}
\newcommand{\bfm}{\symbf{m}}
\newcommand{\bg}{\symbf{g}}
\newcommand{\bu}{\symbf{u}}
\newcommand{\bv}{\symbf{v}}
\newcommand{\bz}{\symbf{z}}
\newcommand{\bV}{\symbf{V}}
\newcommand{\bbf}{\symbf{f}}
\newcommand{\bA}{\symbf{A}}
\newcommand{\bB}{\symbf{B}}
\newcommand{\bD}{\symbf{D}}
\newcommand{\bR}{\symbf{R}}
\newcommand{\bI}{\symbf{I}}
\newcommand{\bJ}{\symbf{J}}
\newcommand{\bP}{\symbf{P}}
\newcommand{\bQ}{\symbf{Q}}
\newcommand{\bU}{\symbf{U}}
\newcommand{\bW}{\symbf{W}}
\newcommand{\bZ}{\symbf{Z}}
\newcommand{\bLam}{\symbf{\Lambda}}
\NewDocumentCommand{\cpi}{}{\symrm{\pi}}
\newcommand{\bx}{\mathbf{x}}
\newcommand{\bn}{\mathbf{n}}
\newcommand{\bomega}{\bm{\omega}}
\newcommand{\bfm}{\bm{m}}
\newcommand{\bg}{\bm{g}}
\newcommand{\bu}{\bm{u}}
\newcommand{\bv}{\bm{v}}
\newcommand{\bz}{\bm{z}}
\newcommand{\bV}{\bm{V}}
\newcommand{\bbf}{\bm{f}}
\newcommand{\bA}{\bm{A}}
\newcommand{\bB}{\bm{B}}
\newcommand{\bD}{\bm{D}}
\newcommand{\bR}{\bm{R}}
\newcommand{\bI}{\bm{I}}
\newcommand{\bJ}{\bm{J}}
\newcommand{\bP}{\bm{P}}
\newcommand{\bQ}{\bm{Q}}
\newcommand{\bU}{\bm{U}}
\newcommand{\bW}{\bm{W}}
\newcommand{\bZ}{\bm{Z}}
\newcommand{\bLam}{\bm{\Lambda}}
\NewDocumentCommand{\cpi}{}{\mathrm{\pi}}
\newcommand{\veps}{\varepsilon}
\newcommand*{\tran}{\mathsf{T}}
\newcommand{\vint}[1]{\langle #1 \rangle}
\newcommand{\Vint}[1]{\left\langle #1 \right\rangle}
\newcommand{\avg}[1]{\{\!\!\{#1\}\!\!\}}
\newcommand{\jmp}[1]{[\![#1]\!]}
\newcommand{\qquand}{\qquad \text{and} \qquad}
\DeclareMathOperator{\diag}{diag}
\title[New AP Hybrid DG Methods]{New Asymptotic Preserving, Hybrid Discontinuous {G}alerkin Methods the Radiation Transport Equation with Isotropic Scattering and Diffusive Scaling}
\author[C.~Hauck \and Q.~Sheng]{Cory Hauck$^\S$ \and Qiwei Sheng$^\dag$ \and Yulong Xing$^\sharp$ }
\thanks{$^\S$ Computational and Applied Mathematics Group, Oak Ridge National Laboratory, Oak Ridge, TN 37831 (hauckc@ornl.gov)}
\address[1]{Computational and Applied Mathematics Group, Oak Ridge National Laboratory, Oak Ridge, TN 37831}
\email{hauckc@ornl.gov}
\thanks{$^\dag$ Department of Mathematics, California State University, Bakersfield, CA 93311 (qsheng@csub.edu)}
\address[2]{Department of Mathematics, California State University, Bakersfield, CA 93311}
\email{qsheng@csub.edu}
\thanks{$^\sharp$ Department of Mathematics, Ohio State University, Columbus, OH 43210 (xing.205@osu.edu)}
\address[4]{Department of Mathematics, Ohio State University, Columbus, OH 43210}
\email{xing.205@osu.edu}
\title{New Asymptotic Preserving, Hybrid Discontinuous {G}alerkin Methods the Radiation Transport Equation with Isotropic Scattering and Diffusive Scaling
}
\author{
	Cory D.~Hauck\thanks{Computational and Applied Mathematics Group, Oak Ridge National Laboratory, Oak Ridge, TN 37831 (\email{hauckc@ornl.gov})}
    \and
    Qiwei Sheng\thanks{Department of Mathematics, California State University, Bakersfield, CA 93311 (\email{qsheng@csub.edu})}
    \and
    Yulong Xing\thanks{Department of Mathematics, The Ohio State University, Columbus, OH 43210 (\email{xing.205@osu.edu})}
}
\begin{document}
\ifx\SIAM\TRUE
\maketitle
\fi

\begin{abstract}
Discontinuous Galerkin (DG) methods are widely adopted to discretize the radiation transport equation (RTE) with diffusive scalings. One of the most important advantages of the DG methods for RTE is their asymptotic preserving (AP) property, in the sense that they preserve the diffusive limits of the equation in the discrete setting, without requiring excessive refinement of the discretization. However, compared to finite element methods or finite volume methods, the employment of DG methods also increases the number of unknowns, which requires more memory and computational time to solve the problems. In this paper, when the spherical harmonic method is applied for the angular discretization, we perform an asymptotic analysis which shows that to retain the uniform convergence, it is only necessary to employ non-constant elements for the degree zero moment only in the DG spatial discretization. Based on this observation, we propose a heterogeneous DG method that employs polynomial spaces of different degrees for the degree zero and remaining moments respectively. To improve the convergence order, we further develop a spherical harmonics hybrid DG finite volume method, which preserves the AP property and convergence rate while tremendously reducing the number of unknowns. Numerical examples are provided to illustrate the effectiveness and accuracy of the proposed scheme. 
\end{abstract}

\ifx\SIAM\FALSE
\subjclass[2020]{65N12, 65N30, 35B40, 35B45, 35L40}
\keywords{radiative transfer equation, asymptotic preserving, spherical harmonic, discontinuous Galerkin, asymptotic analysis}
\maketitle
\else
\begin{keywords}radiation transport equation, asymptotic preserving, spherical harmonic, discontinuous Galerkin, asymptotic analysis\end{keywords}
\begin{MSCcodes} 
	65N12, 65N30, 35B40, 35B45, 35L40 
\end{MSCcodes}
\fi

\section{Introduction}
The radiation transport equation (RTE) is a kinetic model that describes the flow of radiation through a material medium and plays an important role in a wide range of applications involving neutron \cite{davison1957neutron}, photon \cite{pomraning2005equations}, and neutrino \cite{mezzacappa2020physical} transport processes.
The RTE is an integro-differential equation that is numerically challenging to solve due to its high-dimensional space and the multiscale nature of the solution induced by variations in the material medium.  In particular, the nature of the RTE is hyperbolic when scattering is minimal and diffusive when scattering is very prominent.  In cases where the scattering is prominent throughout the spatial domain of a particular problem, a diffusive approximation \cite{LarsenKeller,bensoussan1979boundary} can be used to approximate the particle density at a significantly reduced computational cost.  
However, in true multiscale problems, the scattering may vary by orders of magnitude, requiring the use of the kinetic description provided by the RTE.  Unfortunately, standard finite difference and finite volume (FV) methods designed for hyperbolic problems can become computationally expensive in scattering-dominated regimes.  For stability and accuracy reasons, they must resolve the mean free path between particle collisions with the material.  Thus care must be taken to construct numerical methods which accurately capture the asymptotic diffusion equation that arises in the limit of infinite scattering \cite{LMM1987,LM1989,lowrie2002methods,jin1991discrete,hauck2009temporal}.  Such methods are often referred to as asymptotic preserving (AP) \cite{jin2010asymptotic,Jin1999}.

The discretization of the RTE is usually carried out separately for the angular, spatial, and temporal variables.  In this paper, we focus on the spherical harmonics ($P_N$) method for angular discretization. The $P_N$ method \cite{CZ1967} is one of the most widely employed angular discretizations due to its robustness, accuracy, and computational efficiency. It is a spectral method in the angle that seeks an approximate solution as a sum of spherical harmonics basis functions with coefficients that depend on space (and time in the time-dependent setting).  These coefficients correspond to moments of the RTE; thus the $P_N$ equations can be viewed as a type of moment model \cite{garrett2013comparison}. A major advantage of the $P_N$ method is that it diagonalizes any scattering operator with a rotationally invariant kernel \cite{LM1984}, thereby reducing the cost of evaluating the operator.  In addition,  the method exhibits spectral convergence for smooth problems \cite{frank2016convergence}, and preserves rotational invariance properties of the RTE \cite{garrett2016eigenstructure}.  Moreover, for steady-state problems, the $P_1$ ($N=1$) method is equivalent to the first-order form of the diffusion limit, which makes the asymptotic analysis relatively simple.

For spatial discretization, we focus on discontinuous Galerkin (DG) finite element methods, which become a popular choice in solving the RTE due to their robustness, flexibility, and high accuracy.  Importantly, with sufficiently rich spaces, these methods recover the interior diffusion limit%
\footnote{Here the \textit{interior diffusion limit} refers to the correct interior discretization.  To fully recover the limit, correct boundary conditions are also required.  Boundary conditions are not addressed here.}
of the RTE \cite{LM1989, Adams2001, GK2010, McClarren20102290}.  Roughly speaking, the necessary requirement for DG methods with upwind fluxes to achieve such property is that the finite element spaces support global linear polynomials.  This requirement translates to local $\mathbb{P}_1$ and $\mathbb{Q}_1$ approximations for triangular and rectangular elements, respectively.  The $\mathbb{Q}_1$ requirement can be prohibitive, particularly in three dimensions where the added cost over an FV method increases by a factor of eight. As a consequence, while the AP property of the DG method is desirable, it comes at the expense of higher computational costs, including increased memory demands and longer computing times.

Inspired by the analysis in \cite{mcclarren2008effects, GK2010}, we propose a $P_N$-DG method with heterogeneous polynomial spaces for different moments that maintains the AP property of the DG method while significantly reducing the number of unknowns. Specifically, only the degree zero moment requires a $\mathbb{Q}_1$ approximation in space, while the remaining moments can be approximated in space using piece-wise constants. As a result, the computational cost becomes comparable to that of an FV method. However, even though this method captures the interior diffusion limit, it is generally first-order in space. To alleviate this limitation, we propose a hybrid approach that combines the DG method with $\mathbb{Q}_1$ elements for the degree zero moment and a second-order FV method for the remaining moments. Numerical experiments confirm that this hybrid method successfully captures the interior diffusion limit and achieves uniform second-order accuracy.

The concept of employing different spatial discretizations for different components of the RTE has been explored in several other contexts.  In \cite{buet2015asymptotic}, the $P_N$ equations were split at the continuum level into a diffusive ($P_1$) and non-diffusive part; each part was then discretized with a different FV method.  In \cite{sun2020low}, a discrete ordinate DG ($S_N$-DG) method was proposed in which each ordinate is assigned a unique average but shares a common slope.  In \cite{heningburg2021hybrid,hauck2013collision}, the RTE is separated into collided and uncollided components, and different discretizations are employed for the angular and spatial variables of each component.

The rest of this article is organized as follows. In \Cref{sec:settings}, we introduce the scaled RTE with isotropic scattering and its spherical harmonic discretization along with the notations used throughout the paper. A priori estimates regarding the solutions of the spherical harmonic equation are also presented. In the last part of this section, we define the DG scheme for the spherical harmonic equation and prove its stability and well-posedness. In \Cref{sec:asym_ana}, the asymptotic analysis of the approximation is performed and a DG method with heterogeneous polynomial spaces for different moments is proposed. 
A hybrid DG/FV method is proposed in \Cref{sec:err_ana_hybrid}. Numerical experiments are presented in \Cref{sec:num_results}, and concluding remarks are given in \Cref{sec:conclusion}. 

\section{Preliminaries and problem setting}\label{sec:settings}
Throughout the paper, 
we adopt the conventional notation $H^r(D)$ to indicate Sobolev spaces on (possibly lower-dimensional) subdomain $D\subset X$ with the norm $\|\cdot\|_{r,D}$. 
Clearly, $H^0(D)=L^2(D)$, and this norm is denoted by $\|\cdot\|_D$. 
\subsection{Scaled radiative transfer/transport equations} 
Let $X =[0,1)^3$ be the unit torus in three dimensions, and let $\mathbb{S}$ be the unit sphere in $\mathbb{R}^3$.
%
We consider the following scaled versions of the steady-state and time-dependent RTEs with periodic boundary conditions for the unknown angular flux $u = u(\bx,\bomega)$ and $\psi = \psi(t,\bx,\bomega)$, respectively:
	\begin{alignat}{2}
		\bomega\cdot\nabla u(\bx,\bomega)+\frac{\sigma_{\mathrm{t}}}{\veps}u(\bx,\bomega)
		&=\left(\frac{\sigma_{\mathrm{t}}}{\veps}-\veps\sigma_{\mathrm{a}}\right)\Vint{u}(\bx)+\veps f, 
        &&\quad (\bx,\bomega) \in X \times \mathbb{S},\label{eq:rte_scale_all} 
	\end{alignat}
and
\begin{subequations}\label{eq:rte_scale_time_all}
\begin{alignat}{3}
	\veps\partial_t \psi(t,\bx,\bomega) + \bomega\cdot\nabla \psi(t,\bx,\bomega) 
         &+\frac{\sigma_{\mathrm{t}}}{\veps}\psi(t,\bx,\bomega) &&& \nonumber \\ 
	&=\left(\frac{\sigma_{\mathrm{t}}}{\veps}-\veps\sigma_{\mathrm{a}}\right)\Vint{\psi}(t,\bx)+\veps f,
    &&\quad (\bx,\bomega) \in X \times \mathbb{S}, t>0 \label{eq:rte_scale_time}\\
	\psi(0,\bx,{\bomega})
     &=\psi_0(\bx,\bomega),
     &&\quad (\bx,{\bomega})\in X\times\mathbb{S},\label{eq:rte_scale_time_init}
\end{alignat}
\end{subequations}
where $\Vint{v}=\int_{\mathbb{S}} v \ud{\bomega} \,/ \int_{\mathbb{S}}\ud\bomega$ is the average of $v$ over $\mathbb{S}$
and \(\int_{\mathbb{S}}\ud\bomega=4\pi\).
The functions $\Vint{u}$ and $\Vint{\psi}$ are called the scalar flux.

The parameter $\veps$ is a scaling constant that characterizes the amount of scattering in the problem. For simplicity, we assume $\veps \in (0,1)$. The solutions $u$ and $\psi$ of \eqref{eq:rte_scale_all} and \eqref{eq:rte_scale_time_all}, respectively, depend on $\veps$. However, to reduce the complexity of the notations, $u$ (or $\psi$) instead of $u^{\veps}$ (or $\psi^{\veps}$) will be employed except in \Cref{sec:asym_ana}. 

The functions $\sigma_{\mathrm{a}} = \sigma_{\mathrm{a}}(\bx)$ and $\sigma_{\mathrm{t}}=\sigma_{\mathrm{t}}(\bx)$ are (known) macroscopic absorption and total cross sections, respectively, and $f = f(\bx,\bomega)$ is a (known) source. 
The scaled scattering cross-section $\sigma_{\mathrm{s},\veps}$ takes the form
\begin{equation}
	\sigma_{\mathrm{s},\veps} =  \sigma_{\mathrm{t}} - \veps^2\sigma_{\mathrm{a}}.
\end{equation}
Here and below, a function is said to be isotropic if it is constant with respect to the angular variable $\bomega$.

We make the following assumptions:  
\begin{subequations}\label{as:rte_scale}
	\begin{align}
		&\sigma_{\mathrm{t}},\sigma_{\mathrm{a}}\in L^\infty(X), \\
  &\sigma_{\mathrm{s},\veps} = \sigma_{\mathrm{t}}-\veps^2\sigma_{\mathrm{a}}\geq\sigma_{\mathrm{s}}^{\mathrm{min}} \text{ in } \bx \in X \text{   for a constant } \sigma_{\mathrm{s}}^{\mathrm{min}} >0, \label{as:rte_scale_1}\\
        & \sigma_{\mathrm{a}} \ge \sigma_{\mathrm{a}}^{\mathrm{min}} \text{ in } X \text{   for a constant } \sigma_{\mathrm{a}}^{\mathrm{min}} >0,\label{as:rte_scale_2}\\
		&f(\bx,\bomega)\in L^2(X\times\mathbb{S}).\label{as:rte_scale_3}
	\end{align}
\end{subequations}
The condition \eqref{as:rte_scale_2}, while not strictly necessary, is often used in a priori estimates.

The limit $\veps\rightarrow 0$ corresponds to the ratio of the mean free path of the particles to the diameter of $X$ going to zero. In this regime, the media becomes optically thick and is dominated by scattering.
It can be shown that with isotropic initial conditions, the solutions $u$ and $\psi$ in the diffusion limit (i.e., when $\veps\rightarrow 0$) satisfy the following diffusion equations, respectively \cite{LarsenKeller,habetler1975uniform,bensoussan1979boundary}:
	\begin{align}\label{eq:diffusion}
		\nabla\cdot \left(\frac{1}{3\sigma_{\mathrm{t}}(\bx)}\nabla u(\bx)\right) + \sigma_{\mathrm{a}}(\bx)u(\bx) &= \vint{f}(\bx),\quad \hspace{0.3cm}\bx\in X,
	\end{align}
and 
\begin{subequations}
	\begin{align}
		\partial_t \psi(t,\bx) + \nabla\cdot \left(\frac{1}{3\sigma_{\mathrm{t}}(\bx)}\nabla \psi(t,\bx)\right) + \sigma_{\mathrm{a}}(\bx)\psi(t,\bx) &= \vint{f}(\bx), \; (t,\bx)\in [0,T]\times X,\\
		\psi(0,\bx)&=\psi_0(\bx),\quad \bx\in X.
	\end{align}
\end{subequations}

In the current work, we focus on periodic domains which can be easily treated in a variational framework \cite{ES2012,MRS1999}.  More general boundary conditions are also possible but may be computationally inefficient due to the evaluation of half-space integrals that lead to dense matrices during assembly.  An alternative approach \cite{egger2019perfectly,SW2021} based on perfectly matched layers introduces a small modification that results in a negligible error in consistency but improves the sparsity of the matrix representation.  With this fact in mind, we focus only on the periodic boundary conditions, although the challenge of recovering at the numerical level the correct boundary condition in the diffusion limit remains open.

\subsection{Equations in reduced geometry}
\label{subsec:reduced-geometry}
While the analysis here is often applied to the full 3-D setting, the numerical results in \Cref{sec:num_results} are applied to reduced models with certain symmetries:  a $1$-D slab geometry model and a $2$-D plane-parallel model. 
These (steady-state) reduced models \cite{agoshkov1998boundary,Modest} are briefly introduced here. The corresponding time-dependent equations are obvious and therefore omitted.  

In a spherical coordinate system, $\bomega$ is determined by the polar angle $\theta\in[0,\pi]$ and azimuthal angle $\varphi\in[0,2\pi)$.  Thus 
\begin{equation}
\label{eq:omega_components}
    \bomega=
    \begin{bmatrix} \omega_x & \omega_y & \omega_z \end{bmatrix}^{\tran}
    =\begin{bmatrix} \sin\theta\cos\varphi & \sin\theta\sin\varphi & \cos\theta \end{bmatrix}^{\tran}.
\end{equation}
Reduced geometry problems will frequency be expressed in terms of these angles, as well as the variable $\mu = \cos \theta \in [-1,1]$.

\paragraph*{\textbf{One-dimensional slab geometry problem}} 
Let \(I=[0,1)\) be the unit \(1\)-torus in \(\R^2\).
In the slab geometry, the RTE \eqref{eq:rte_scale_all} reduces to the following form 
	\begin{equation}
		\mu\frac{\partial u}{\partial z}+\frac{\sigma_{\mathrm{t}}}{\veps} u = \frac{1}{2}\left(\frac{\sigma_{\mathrm{t}}}{\veps}-\veps\sigma_{\mathrm{a}}\right) \int^{1}_{-1} u(z,\mu')\ud \mu' +  f,\quad \forall z\in I,\label{eq:1d}
	\end{equation}
where $z \in I$, $u=u(z,\mu)$, $f=f(z,\mu)$, and $\ud \mu'$ is the Lebesgue measure on $(-1,1)$. 

\paragraph*{\textbf{Two-dimensional plane-parallel model}}
Let \(J=[0,1)^2\) be the unit \(2\)-torus in \(\R^3\). In the parallel-plane model, the RTE \eqref{eq:rte_scale_all} takes the form
	\begin{equation}
		\sqrt{1-\mu^2}\cos\varphi\frac{\partial u}{\partial x} + \mu\frac{\partial u}{\partial z} + \frac{\sigma_{\mathrm{t}}}{\veps}u
		=\left(\frac{\sigma_{\mathrm{t}}}{\veps}-\veps\sigma_{\mathrm{a}}\right)\Vint{u}+\veps f,\label{eq:2d}
	\end{equation}
where \((x,z)\in J\), $u=u(x,z,\mu,\varphi)$, and $f=f(x,z)$.

\subsection{Time discretion and translation to steady-state form}\label{sec:Time_disc}
There are a variety of schemes to discretize the time variable in the RTE \eqref{eq:rte_scale_time}. Implicit methods are common, and here we employ the backward differentiation formula (BDF). In detail, let  $t_n=n\varDelta t$ with $\Delta t$ being the time step size, and $\psi^{(n)} (\bx,\bomega)=\psi(t_n,\bx,\bomega)$. Then the $s$-step BDF method applied to the time-dependent RTE \eqref{eq:rte_scale_time} can be formed as 
\begin{multline}\label{eq:rte_scale_time_disc}
	\veps \psi^{(n)} = \veps\sum_{p=1}^{s}\beta_{s,p} \psi^{(n-p)} + \gamma_s\varDelta t  \bigg(-\bomega\cdot\nabla \psi^{(n)} - \frac{\sigma_{\mathrm{t}}(\bx)}{\veps}\psi^{(n)} \\
	+ \left(\frac{\sigma_{\mathrm{t}}(\bx)}{\veps} - \veps\sigma_{\mathrm{a}}(\bx)\right)\Vint{\psi^{(n)}} + \veps f(\bx,\bomega)\bigg),
\end{multline}
where the spatial and angular variables remain continuous. 
The coefficients of $\beta_{s,p}$ and $\gamma_s$ for $s$-step BDFs with $p\le s$ can be found, e.g., in \cite{SM2003} for $s \le 5$. 

After some simple manipulations, the steps in \eqref{eq:rte_scale_time_disc} can be rewritten as the following steady-state RTE
\begin{equation}
\label{eq:steady-state-equiv}
	\bomega\cdot\nabla \psi^{(n)}+\frac{\sigma^{(n)}_{\mathrm{t}}}{\veps}\psi^{(n)}
	=\left(\frac{\sigma_{\mathrm{t}}}{\veps}-\veps\sigma_{\mathrm{a}}\right)\Vint{\psi^{(n)}} +\veps f^{(n)},
\end{equation}
where 
\begin{equation}
	f^{(n)} = f + \frac{1}{\gamma \varDelta t} \sum_{p=1}^{s}\beta_{s,p} \psi^{(n-p)}
\end{equation}
is a source depending on the values at previous steps and $\sigma^{(n)}_{\mathrm{t}}=\sigma_{\mathrm{t}}+\frac{\veps^2}{\gamma_s\varDelta t}$ is the effective total cross-section.
After dropping the temporal index $n$, \eqref{eq:steady-state-equiv} takes the form of the scaled steady-state RTE \eqref{eq:rte_scale_all}, which will be the focus of the analysis in the rest of this paper.

 
\subsection{Spherical harmonic method for angular discretization}\label{sec:SH_PN}
In this section, we briefly review the spherical harmonic functions and the $P_N$ method. The interested readers are referred to \cite{AH2012,Claus1966} for more discussions.  Formulas for the spherical harmonics are given in \Cref{sec:appendix_PN}. 


Given integers $\ell\ge 0$ and $\kappa\in [-\ell,\ell]$, let $m^\kappa_\ell$ be the real-valued spherical harmonic of degree $\ell$ and order $\kappa$ on $\mathbb{S}$, normalized such that $\int_{\mathbb{S}} (m^\kappa_\ell(\bomega))^2 d \bomega = 1$.
We collect the $n_\ell := 2\ell+1$ real-valued normalized harmonics of degree $\ell$ together into a vector-valued function $\bfm_\ell=\begin{bmatrix} m^{-\ell}_\ell & m^{-\ell+1}_\ell & \cdots & m^0_\ell & \cdots & m^{\ell-1}_\ell & m^\ell_\ell \end{bmatrix}^{\tran}$ and then for any given $N$, we set $\bfm = \begin{bmatrix}\bfm_0^{\tran} & \bfm_1^{\tran} & \cdots & \bfm_N^{\tran}\end{bmatrix}^{\tran}$. 
In all, $\bfm$ has $L := \sum^N_{\ell=0} n_\ell= (N+1)^2$ components which form an orthogonal basis for the space
$\mathbb{P}_N=\left\{\sum^N_{\ell=0} \sum^\ell_{\kappa=-\ell} c^\kappa_\ell m^\kappa_\ell\colon c^\kappa_\ell\in \mathbb{R}, \text{ and } 0\le\ell\le N, |\kappa|\le\ell\right\}$.
Furthermore, the spherical harmonics fulfill a recursion relation of the form \cite{AH2012}
\begin{equation}\label{eq:SH_rec}
\omega_i \bfm_\ell = \bA^{(i)}_{\ell,\ell+1}\bfm_{\ell+1} + \bA^{(i)}_{\ell,\ell-1} \bfm_{\ell-1},
\end{equation}
where $\bA^{(i)}_{\ell,\ell'} = \int_{\mathbb{S}} \omega_i\bfm_\ell\bfm^{\tran}_{\ell'}\ud\bomega$ and $\big(\bA^{(i)}_{\ell,\ell'}\big)^{\tran}=\bA^{(i)}_{\ell',\ell}$.

The $P_N$ equations for the radiative transfer equation \eqref{eq:rte_scale_all} are derived by approximating $u$ by a function $u_{\mathrm{PN}}$ of the form of
\begin{equation}\label{eq:u_appx}
u_{\mathrm{PN}}(\bx,\bomega) = \bu^{\tran}\bfm = \sum_{\ell=0}^N\bu^{\tran}_\ell\bfm_\ell = \sum_{\ell=0}^N\sum_{|k|\leq \ell} {u}^k_\ell{m}_\ell^k
\end{equation}
such that $\forall\, v\in \mathbb{P}_N$,
\begin{equation}\label{eq:rte_scale_pre_Pn}
\Big\langle v\big(\bomega\cdot\nabla u_{\mathrm{PN}}(\bx,\bomega)+\frac{\sigma_{\mathrm{t}}(\bx)}{\veps} u_{\mathrm{PN}}(\bx,\bomega) -\left(\frac{\sigma_{\mathrm{t}}(\bx)}{\veps}-\veps\sigma_{\mathrm{a}}(\bx)\right)\bar{u}_{\mathrm{PN}}\big) \Big\rangle = \veps\langle v f \rangle.
\end{equation}
Because the components $m_l^\kappa$ are normalized, it follows that
$u_\ell^k := \int_{\mathbb{S}}m_\ell^k u_{\mathrm{PN}} \ud\bomega,  \quad \ell=0,\cdots,N, \; |k|\le\ell$.
The double index notation for $\bu$ is inherited from $\bfm$; that is 
$\bu_\ell = \begin{bmatrix}u_\ell^{-\ell}&\cdots&u_\ell^\ell\end{bmatrix}^{\tran}$ and $\bu= \begin{bmatrix}\bu_0^{\tran} & \bu_1^{\tran} & \cdots & \bu_N^{\tran}\end{bmatrix}^{\tran}$. However, for convenience, we often use a single index to denote the components in $\bfm$ and $\bu$, e.g., $\bu=\begin{bmatrix}u_1 & u_2 & \cdots & u_L\end{bmatrix}^{\tran}$, in which case the function $u_{\mathrm{PN}}$ can be expressed as
$
u_{\mathrm{PN}} =\sum_{i=1}^{L}m_iu_i.
$

Setting $v=\bfm$ in \eqref{eq:rte_scale_pre_Pn} and using \eqref{eq:u_appx} and the fact that $\int_{\mathbb{S}} \bfm\bfm^{\tran}\ud\bomega=\bI$, we can reformulate \eqref{eq:rte_scale_pre_Pn} into a system of hyperbolic equations ($P_N$ equations): 
	\begin{align}
		\bA\cdot \nabla \bu(\bx) + \veps\sigma_{\mathrm{a}}\bu(\bx) + \left(\frac{\sigma_{\mathrm{t}}}{\veps}-\veps\sigma_{\mathrm{a}}\right) \bR\bu(\bx) &= \veps\bbf(\bx) \qquad \text{ in } X, \label{eq:rte_scale_Pn}
	\end{align}
Here $\bu(\bx)\in \mathbb{R}^L$;
$\bbf=\int_{\mathbb{S}} \bfm f \ud \bomega$; $\bR=\bI-\diag(1,0,\cdots,0)=\diag(0,1,\cdots,1)$  is diagonal and positive semi-definite;
and the dot product between 
\begin{equation}
    \bA:=\begin{bmatrix} \left(\bA^{(1)}\right)^\tran & \left(\bA^{(2)}\right)^\tran & \left(\bA^{(3)}\right)^\tran \end{bmatrix}^\tran
\end{equation} and the gradient is understood as
$\bA\cdot \nabla := \sum^3_{i=1}\bA^{(i)}\partial_i$ with 
$\bA^{(i)}=\int_{\mathbb{S}}\omega_i\bfm\bfm^{\tran}\ud\bomega$,  $i=1,2,3$.
Note that $\bA^{(i)},i=1,2,3$ is symmetric and sparse. Moreover, the recursion relation \eqref{eq:SH_rec} and orthogonality conditions for spherical harmonics \cite{AH2012, Claus1966} imply that $\bA^{(i)}_{\ell,\ell'}$ is nonzero only if $\ell' = \ell \pm 1$. Therefore, 
\begin{equation}\label{eq:A_struc}
\bA^{(i)} = 
\begin{bmatrix}
0 				& \bA^{(i)}_{0,1} 	& 0             	& 0 				& \dots					& 0  \\
\bA^{(i)}_{1,0} & 0 			  	& \bA^{(i)}_{1,2}	& 0 				& \dots 			& 0\\
0				& \bA^{(i)}_{2,1}  	& 0 			   	& \bA^{(i)}_{2,3}   & \dots				& 0\\
0				& 0					& \ddots			& \ddots			& \ddots			& 0\\
\vdots			& \vdots			& \ddots 			&\bA^{(i)}_{N-1,N-2}& 0					&\bA^{(i)}_{N-1,N} \\
0				& 0					& \dots				&0					&\bA^{(i)}_{N,N-1}& 0 \\
\end{bmatrix}.
\end{equation}
Since $\bA^{(i)}$, $i=1,2,3$ are symmetric, they can be diagonalized as:
\begin{equation}\label{eq:A_sym}
\bA^{(i)}=\bU_i\bLam^{(i)} \bU_{i}^{\tran},
\end{equation}
where $\bU_i$ is a real-valued orthogonal matrix and $\bLam^{(i)}=\diag\left(\lambda^{(i)}_1,\lambda^{(i)}_2,\cdots,\lambda^{(i)}_L\right)$ is a real-valued diagonal matrix.
With this decomposition, let
\begin{equation}
\label{eq:matrix_abs}
|\bA^{(i)}|=\bU_i |\bLam^{(i)}| \bU_i^{\tran}
\quad \text{where} \quad
|\bLam^{(i)}|:=\diag(|\lambda^{(i)}_1|,|\lambda^{(i)}_2|,\cdots,|\lambda^{(i)}_L|)   
 \end{equation}
define the absolution value of such matrices.

Let $D\subseteq X$ be a (possibly lower-dimensional) subdomain of $X$. Given $\bu, \bv\in \left[L^2(D)\right]^L$,  define the inner product
\begin{equation}
(\bu,\bv)_D=\int_D \bu^{\tran} \bv\ud\bx = \sum_{\ell=0}^N\sum_{|k|\leq \ell} \int_D u^k_\ell\,v^k_\ell \ud \bx,
\end{equation}
and the Sobolev norms
\begin{equation}\label{eq:norm_def}
\|\bu\|_{r,D}=\left(\sum_{\ell=0}^N\sum_{|k|\leq \ell}\|u^k_\ell\|_{r,D}^2\right)^{1/2}, \quad\text{ for } \bu\in \left[H^r(D)\right]^L.
\end{equation}
When $r=0$ and $D=X$, we omit the subscripts $0$ and $X$, i.e., $(\bu,\bv):=(\bu,\bv)_X$ and $\|\bu\|:=\|\bu\|_{0,X}$. Since $\int_{\mathbb{S}}\bfm\bfm^{\tran}\ud\bomega=\bI$, it follows that
\begin{equation}
\|u_{\mathrm{PN}}\|_{L^2(X\times\mathbb{S})} 
=\|\bu\|.
\end{equation}
Define the space
$\bV=\left\{\bu\in [L^2(X)]^{L}\colon \bA\cdot\nabla\bu\in [L^2(X)]^{L}\right\}$
with the associated norm 
$\|\bu\|_{\bV} = \|\bu\| + \|\bA\cdot\nabla\bu\|$,
and the space $\bW=\left\{\bu\in\bV:\bu\right.$ is $1$-periodic in each spatial argument $x_i$, $\left.i=1,2,3\right\}$. We have the following existence and uniqueness result \cite[Theorem 6]{SW2021}.
\begin{theorem}\label{th:Pn_wellposed}
	Assume the assumptions \eqref{as:rte_scale} hold. For any fixed $\veps>0$, the system of $P_N$ equations \eqref{eq:rte_scale_Pn} has a unique solution $\bu\in\bW$.
\end{theorem}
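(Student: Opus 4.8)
The plan is to recast \eqref{eq:rte_scale_Pn} as the operator equation $\mathcal{L}\bu := \bA\cdot\nabla\bu + \mathcal{C}\bu = \veps\bbf$ posed on the graph space $\bW$, where $\mathcal{C}(\bx) := \veps\sigma_{\mathrm{a}}\bI + (\sigma_{\mathrm{t}}/\veps - \veps\sigma_{\mathrm{a}})\bR = \diag(\veps\sigma_{\mathrm{a}}, \sigma_{\mathrm{t}}/\veps, \dots, \sigma_{\mathrm{t}}/\veps)$ is the zeroth-order coefficient. The point is that $\mathcal{C}$ is symmetric, bounded on $L^2$ (since $\sigma_{\mathrm{a}}, \sigma_{\mathrm{t}} \in L^\infty(X)$), and uniformly positive definite: because $\sigma_{\mathrm{a}} \ge \sigma_{\mathrm{a}}^{\mathrm{min}} > 0$ by \eqref{as:rte_scale_2} and $\sigma_{\mathrm{t}} \ge \sigma_{\mathrm{s},\veps} \ge \sigma_{\mathrm{s}}^{\mathrm{min}} > 0$ by \eqref{as:rte_scale_1}, one has $\bv^{\tran} \mathcal{C}(\bx) \bv \ge c_0 |\bv|^2$ pointwise with $c_0 := \min(\veps\sigma_{\mathrm{a}}^{\mathrm{min}}, \sigma_{\mathrm{s}}^{\mathrm{min}}/\veps) > 0$ for each fixed $\veps$.

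The key structural identity is that the transport operator $\bA\cdot\nabla$ is skew-symmetric on $\bW$. Indeed, for smooth $1$-periodic $\bu,\bv$, the symmetry of each $\bA^{(i)}$ together with integration by parts and the vanishing of boundary terms on the torus gives $(\bA\cdot\nabla\bu,\bv) = -(\bu,\bA\cdot\nabla\bv)$; in particular $(\bA\cdot\nabla\bu,\bu) = 0$. I would extend this to all of $\bW$ by the density of trigonometric polynomials in the graph norm, which holds because Fourier truncation commutes with the constant-coefficient operator $\bA\cdot\nabla$, so that if $\bu_n$ denotes the partial Fourier sum of $\bu \in \bW$ then $\bA\cdot\nabla\bu_n = (\bA\cdot\nabla\bu)_n \to \bA\cdot\nabla\bu$ in $L^2$. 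Combining this with the positivity of $\mathcal{C}$ yields the coercivity-type estimate $(\mathcal{L}\bu,\bu) = (\mathcal{C}\bu,\bu) \ge c_0\|\bu\|^2$. Testing the equation against $\bu$ and applying Cauchy--Schwarz then gives both uniqueness and the a priori bound $\|\bu\| \le \veps\|\bbf\|/c_0$, so the only remaining task is existence.

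For existence I would show that $\mathcal{L}$ is surjective onto $[L^2(X)]^L$. The cleanest route is to first establish that $\bA\cdot\nabla$ with domain $\bW$ is maximal skew-symmetric (i.e.\ skew-adjoint), and then add the bounded, monotone, coercive perturbation $\mathcal{C}$. Maximality reduces to the surjectivity of $\bI \pm \bA\cdot\nabla$ on $[L^2(X)]^L$, which I would verify on the Fourier side: writing $\bA\cdot\nabla$ as multiplication by the symbol $2\cpi\im\,(\bA\cdot\xi) = 2\cpi\im\sum_j \xi_j \bA^{(j)}$ for $\xi \in \mathbb{Z}^3$, each matrix $\bA\cdot\xi$ is real and symmetric, so $\bI \pm 2\cpi\im\,(\bA\cdot\xi)$ is normal and invertible with inverse of operator norm at most $1$; this bounds the mode-by-mode inverse uniformly in $\xi$ and produces a preimage lying in $\bW$. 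The sum $\mathcal{L} = \bA\cdot\nabla + \mathcal{C}$ is then maximal monotone and coercive, hence a bijection onto $[L^2(X)]^L$ by the Minty--Browder theorem (equivalently, the Lax--Milgram lemma for monotone operators), which delivers the unique $\bu \in \bW$.

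I expect the main obstacle to be the existence half, and specifically the verification that $\bW$ endowed with the graph norm is exactly the domain on which $\bA\cdot\nabla$ is maximal skew-symmetric, that is, the graph-norm density of smooth periodic functions together with the surjectivity of $\bI \pm \bA\cdot\nabla$. On the torus these are tractable because $\bA\cdot\nabla$ has constant coefficients and is diagonalized by the Fourier basis, whereas the variable-coefficient term $\mathcal{C}$ never has to be treated at the symbol level and is instead absorbed through the abstract monotonicity argument. An alternative that sidesteps the operator-theoretic machinery would be a Galerkin scheme in the trigonometric basis: solve the finite-dimensional projections, use the uniform estimate $\|\bu\| \le \veps\|\bbf\|/c_0$ to extract a weakly convergent subsequence, and recover $\bA\cdot\nabla\bu \in [L^2(X)]^L$ directly from the identity $\bA\cdot\nabla\bu = \veps\bbf - \mathcal{C}\bu$.
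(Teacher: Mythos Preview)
The paper does not prove this theorem; it merely cites \cite[Theorem~6]{SW2021}. Your proposal is correct and self-contained. The decomposition $\mathcal{L}=\bA\cdot\nabla+\mathcal{C}$ with $\mathcal{C}$ bounded, symmetric, and uniformly positive (from \eqref{as:rte_scale_1}--\eqref{as:rte_scale_2}) and $\bA\cdot\nabla$ skew-symmetric on $\bW$ yields the a~priori bound and uniqueness immediately. For existence, both of your routes are sound on the torus: the Fourier-symbol argument (each $\bI\pm 2\pi i(\bA\cdot\xi)$ is normal with inverse of norm at most $1$) shows that $\bA\cdot\nabla$ with domain $\bW$ is skew-adjoint, and adding the bounded coercive $\mathcal{C}$ then gives surjectivity by maximal monotone theory; the Galerkin alternative with trigonometric test functions is more elementary and equally valid. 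Since the paper offers no proof of its own there is nothing to compare against, but your argument is the standard one for symmetric positive (Friedrichs) systems on periodic domains and is almost certainly close to what the cited reference does.
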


\subsection{Discontinuous {G}alerkin method for spatial discretization}\label{sec:DG}
   Let  $\mathcal{T}^h$ be a regular family of partition of $X$ with elements $K$.
The meshes on the boundary are assumed to be periodic to avoid unnecessary technicalities, i.e., the surface meshes on two opposite parallel faces of $\partial X$ are identical.
Define $h_K:= \textrm{diam}(K)$ and $h:=\max_{K\in \mathcal{T}^h}h_K$.
Denote by $\bn^K=\begin{bmatrix}n_1^K & n_2^K & n_3^K\end{bmatrix}^{\tran}$ the unit outward normal to $\partial K$ with respect to $K$. 

Let  $V_{h,k}$ be a DG finite element space whose elements are polynomials of degree no more than $k$ when restricted to any element $K$, i.e.,
\begin{equation}\label{eq:poly_space}
	V^{h,k}=\begin{cases}
		\{v\in L^2(\mathbb{S})\colon v|_K\in \mathbb{P}_k(K)\;\forall K\in \mathcal{T}^h\}, &\text{ if } \mathcal{T}^h \text{ is triangular}, \\
		\{v\in L^2(\mathbb{S})\colon v|_K\in \mathbb{Q}_k(K)\;\forall K\in \mathcal{T}^h\}, &\text{ if } \mathcal{T}^h \text{ is rectangular},
	\end{cases}
\end{equation}
where $k$ is a nonnegative integer, $\mathbb{P}_k(K)$ denotes the set of all polynomials on $K$ with maximum total degree $k$, and
$\mathbb{Q}_k(K)$ denotes the set of all polynomials on $K$ with maximum degree $k$ in each dimension.
Define $\bV^{h,k_\ell}_\ell=[V^{h,k_\ell}]^{2\ell+1}$, $\ell=0,1,\cdots,N$,
and the Cartesian product
$\bV^h=  \bV^{h,k_0}_0\times \bV^{h,k_1}_1\times \cdots \times \bV^{h,k_N}_N$. 

Given an element $K\in\mathcal{T}^h\subset \mathbb{R}^3$, and any function $\bg\in \bV^h$, 
we define the inside ($-$) and outside ($+$) values of a function $\bg$ (with respect to $K$) as, respectively,
\begin{subequations}
	\begin{align}
	\bg^\pm (\bx) &=\lim\limits_{\epsilon \to 0}\bg(\bx \pm \epsilon\,\bn^K(\bx)), \quad \;\;\;\forall \bx\in \partial K.
	\end{align}
\end{subequations}
The average $\avg{\bg}$ and jump $\jmp{\bg}$ in $\bg\in \bV^h$ with respect to $K$ are given by
\begin{equation}
    \label{eq:avg_jmp}
    \avg{\bg}=\frac12(\bg^+ + \bg^-)
    \qquand
   \jmp{\bg}=\bg^+-\bg^-.
\end{equation}
%

For any $\bu^h,\bv^h\in \bV^h$, define
\begin{subequations}\label{eq:a_b}
	\begin{align}\label{eq:a}
	\mathfrak{a}^h(\bu^h,\bv^h) 
    &= \sum_{K\in\mathcal{T}^h}\bigg\{ \Big(\bn^K\cdot\overrightarrow{\bA\bu^h}, \, \bv^{h-}\Big)_{\partial K} - \sum_{i=1}^3(\bA^{(i)}\bu^h, \partial_i \bv^h)_K + (\bQ\bu^h,\bv^h)_K \bigg\}, \\
	\mathfrak{f}(\bv^h) 
 &= \veps\sum_{K\in\mathcal{T}^h}(\bbf,\bv^h)_K
	\end{align}
\end{subequations}
where  
\begin{equation}\label{eq:def_Q}
	\bQ = \veps\sigma_{\mathrm{a}}\bI + \left(\frac{\sigma_{\mathrm{t}}}{\veps}-\veps\sigma_{\mathrm{a}}\right) \bR = \begin{bmatrix}\veps\sigma_{\mathrm{a}} &  \\  & \frac{\sigma_{\mathrm{t}}}{\veps}\bI_{L-1} \end{bmatrix} 
\end{equation}
and $\bI$ is the identity matrix.  Following \cite{HLL1983}, we define the numerical flux 
as
\begin{equation}\label{eq:flux_Pn_comp}
    n_i^K\overrightarrow{\bA^{(i)}\bu^h} = n_i^K\bA^{(i)}\{\!\!\{\bu^h\}\!\!\} -\frac{1}{2}\left|n_i^K\right|\,\bD^{(i)}\,[\![\bu^h]\!], \quad i=1,2,3,
\end{equation}
and therefore
\begin{equation}\label{eq:flux_Pn}
	\bn^K\cdot\overrightarrow{\bA\bu^h} = \bn^K\cdot\bA\{\!\!\{\bu^h\}\!\!\} -\frac{1}{2}|\bn^K|\cdot\bD\,[\![\bu^h]\!],	
\end{equation}
where $|\bn^K|\cdot\bD = \sum_{i=1}^3 |n_i^K|\bD^{(i)}$ and, assuming the upwind flux, $\bD^{(i)}=|\bA^{(i)}|$. 
The fully discrete spherical harmonic DG scheme for \eqref{eq:rte_scale_all}  can be cast as follows.
\begin{prob}[Spherical harmonic DG scheme]\label{prob:SH_DG}
	Find $\bu^h\in \bV^h$ such that
	\begin{equation}\label{eq:bilinear}
	\mathfrak{a}^h(\bu^h,\bv^h) = \mathfrak{f}(\bv^h) \quad \forall \bv^h\in \bV^h, 
	\end{equation}
 where $\mathfrak{a}^h$ and $\mathfrak{f}$ are defined in \eqref{eq:a_b}.
\end{prob}

To study the stability and well-posedness of \Cref{prob:SH_DG},
we define the norm
\begin{equation}\label{def:norm}
\|\bv^h\|_{\mathfrak{a}^h} = \left(\frac{1}{4}\sum_{K\in\mathcal{T}^h} \Big(|\bn^K|\cdot\bD\,[\![\bv^h]\!], [\![\bv^h]\!]\Big)_{\partial K} + (\bQ\bv^h,\bv^h)\right)^\frac{1}{2},
\end{equation}
for any $\bv^h\in \bV_{h}$.

\begin{lemma}[Stability]\label{lem:stab}
	Under the assumptions \eqref{as:rte_scale_1} and \eqref{as:rte_scale_2}, 
	\begin{equation}\label{eq:stab}
	\|\bv^h\|^2_{\mathfrak{a}^h} = \mathfrak{a}^h(\bv^h,\bv^h), \quad \forall \bv^h\in \bV^h.
	\end{equation}
\end{lemma}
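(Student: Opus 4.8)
The plan is to substitute $\bu^h=\bv^h$ directly into the definition \eqref{eq:a} of $\mathfrak{a}^h$ and reorganize the result into the two pieces appearing in \eqref{def:norm}. The volume term $(\bQ\bv^h,\bv^h)_K$ summed over $K$ reproduces $(\bQ\bv^h,\bv^h)$ verbatim, so the entire content of the lemma reduces to showing that the transport part, namely the boundary flux terms minus the advective volume terms, collapses to the dissipative jump functional
\begin{equation*}
\sum_{K\in\mathcal{T}^h}\Big\{\big(\bn^K\cdot\overrightarrow{\bA\bv^h},\bv^{h-}\big)_{\partial K}-\sum_{i=1}^3(\bA^{(i)}\bv^h,\partial_i\bv^h)_K\Big\}=\frac14\sum_{K\in\mathcal{T}^h}\big(|\bn^K|\cdot\bD\,\jmp{\bv^h},\jmp{\bv^h}\big)_{\partial K}.
\end{equation*}

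First I would dispose of the volume advective term on a fixed element $K$. Because each $\bA^{(i)}$ is a \emph{constant symmetric} matrix, the product rule gives $\partial_i\big((\bv^h)^{\tran}\bA^{(i)}\bv^h\big)=2(\bA^{(i)}\bv^h)^{\tran}\partial_i\bv^h$, so that $\sum_{i=1}^3(\bA^{(i)}\bv^h,\partial_i\bv^h)_K=\tfrac12\int_K\nabla\cdot\big((\bv^h)^{\tran}\bA\bv^h\big)\ud\bx$. Applying the divergence theorem on $K$ then yields $\tfrac12\big((\bn^K\cdot\bA)\bv^{h-},\bv^{h-}\big)_{\partial K}$, since the trace from inside $K$ is $\bv^{h-}$. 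Next I would insert the numerical flux \eqref{eq:flux_Pn} together with the elementary identity $\avg{\bv^h}=\bv^{h-}+\tfrac12\jmp{\bv^h}$ into the boundary term. Collecting everything, the per-element transport contribution becomes
\begin{equation*}
T_K=\tfrac12\big((\bn^K\cdot\bA)\bv^{h-},\bv^{h-}\big)_{\partial K}+\tfrac12\big((\bn^K\cdot\bA)\jmp{\bv^h},\bv^{h-}\big)_{\partial K}-\tfrac12\big(|\bn^K|\cdot\bD\,\jmp{\bv^h},\bv^{h-}\big)_{\partial K},
\end{equation*}
using the symmetry of $\bn^K\cdot\bA$ to move the matrix across the inner product.

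The decisive step is to sum $T_K$ over the mesh. By the periodicity assumption on $X$ and on the boundary meshes, there are no physical boundary faces: every face $e$ is interior and shared by exactly two elements $K,K'$ whose outward normals satisfy $\bn^{K'}=-\bn^K$, while the inside/outside traces swap. I would therefore pair the contributions of $K$ and $K'$ on each such $e$, writing $\bv^{h+}=\bv^{h-}+\jmp{\bv^h}$ and exploiting two facts: $\bn\cdot\bA$ is symmetric and \emph{reverses sign} under $\bn\mapsto-\bn$, whereas $|\bn|\cdot\bD=\sum_i|n_i|\,\bD^{(i)}$ is symmetric, positive semidefinite, and \emph{invariant} under normal reversal (since $|-n_i|=|n_i|$). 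A short expansion shows that the self-energy pieces $\tfrac12(\bn\cdot\bA)\bv^{h-},\bv^{h-}$ and the advective cross terms $\tfrac12(\bn\cdot\bA)\jmp{\bv^h},\bv^{h-}$ cancel identically across the face, while the two dissipative cross terms combine to $\tfrac12\int_e\jmp{\bv^h}^{\tran}(|\bn|\cdot\bD)\jmp{\bv^h}\ud s$. Since the element-wise sum on the right-hand side counts each face twice, the factor $\tfrac14$ is exactly reproduced, and adding back the $\bQ$ term completes the identification with $\|\bv^h\|_{\mathfrak{a}^h}^2$.

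The main obstacle is purely the orientation bookkeeping in this last paragraph: one must track the $\pm$ traces and the sign of $\bn\cdot\bA$ consistently across each shared face and verify that the (skew-behaving) advective flux contributions telescope to zero, leaving only the sign-insensitive jump penalty. The algebra is routine once the identity $\avg{\bv^h}=\bv^{h-}+\tfrac12\jmp{\bv^h}$ and the symmetry/constancy of $\bA^{(i)}$ are in hand, but the factor-of-two accounting between the face-based and element-based sums is where sign or normalization errors would most easily creep in.
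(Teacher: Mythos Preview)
Your proposal is correct and follows essentially the same route as the paper: both reduce the volume advective term to $\tfrac12(\bn^K\cdot\bA\,\bv^{h-},\bv^{h-})_{\partial K}$ via the symmetry of $\bA^{(i)}$ and the divergence theorem, and both use face pairing (with $\bn^{K'}=-\bn^K$ and trace swapping) to collapse the flux contributions to the jump penalty. The only cosmetic difference is that the paper leaves the average $\avg{\bv^h}$ intact and asserts the identities $\sum_K(\bn^K\cdot\bA\,\avg{\bv^h},\bv^{h-})_{\partial K}=\tfrac12\sum_K(\bn^K\cdot\bA\,\bv^{h-},\bv^{h-})_{\partial K}$ and $-\tfrac12\sum_K(|\bn^K|\cdot\bD\,\jmp{\bv^h},\bv^{h-})_{\partial K}=\tfrac14\sum_K(|\bn^K|\cdot\bD\,\jmp{\bv^h},\jmp{\bv^h})_{\partial K}$ directly, whereas you first expand $\avg{\bv^h}=\bv^{h-}+\tfrac12\jmp{\bv^h}$ and then cancel the resulting cross terms; the underlying algebra is identical.
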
 
\begin{proof}
	By the definition \eqref{eq:a}, it follows that for any $\bv^h\in \bV^h$, $\mathfrak{a}^h(\bv^h,\bv^h) = \mathrm{I} +\mathrm{II} + \mathrm{III}$, where
	\begin{align*}
	\mathrm{I} &:=  \sum_{K\in\mathcal{T}^h} \sum_{i=1}^3 \Big(n_i^K\bA^{(i)}\{\!\!\{\bv^h\}\!\!\} -\frac{1}{2}|n_i^K|\bD^{(i)}[\![\bv^h]\!], \bv^{h-}\Big)_{\partial K}, \\
	\mathrm{II} &:=  -\sum_{K\in\mathcal{T}^h} \sum_{i=1}^3 (\bA^{(i)}\bv^h,\partial_i \bv^h)_K, \quad
	\mathrm{III} :=  \sum_{K\in\mathcal{T}^h} (\bQ\bv^h,\bv^h)_K.
	\end{align*}
	For the first two terms, we have
	\begin{equation}
	\mathrm{I}
	=  \sum_{K\in\mathcal{T}^h}  \frac{1}{2}\Big(\bn^K\cdot\bA \bv^{h-}, \bv^{h-}\Big)_{\partial K} + \sum_{K\in\mathcal{T}^h} \frac{1}{4} \Big(|\bn^K|\cdot\bD\,[\![\bv^h]\!], [\![\bv^h]\!]\Big)_{\partial K}.\nonumber
	\end{equation}
	\begin{equation}
	\mathrm{II}
	= -\sum_{K\in\mathcal{T}^h} \sum_{i=1}^3 \frac{1}{2} (n_i^K\bLam^{(i)} \bU_i^{\tran}\bv^{h-}, \bU_i^{\tran}\bv^{h-})_{\partial K} 
	= -\sum_{K\in\mathcal{T}^h} \frac{1}{2} (\bn^K\cdot\bA\bv^{h-}, \bv^{h-})_{\partial K}.  \nonumber
	\end{equation}
	Therefore, we have $\mathrm{I} + \mathrm{II} = \sum_{K\in\mathcal{T}^h} \frac{1}{4} \left(|\bn^K|\cdot\bD\,[\![\bv^h]\!], [\![\bv^h]\!]\right)_{\partial K}$ and \eqref{eq:stab} follows.
\end{proof}
Since $\bV^h$ is of finite dimension, all norms in $\bV^h$ are equivalent, which ensures continuity of the bilinear form. Due to the Lax–Milgram lemma, under the assumptions \eqref{as:rte_scale_1} and \eqref{as:rte_scale_2}, the spherical harmonic DG method \eqref{eq:bilinear} has a unique solution.

\section{Asymptotic Analysis and Diffusion Limit}\label{sec:asym_ana}
In this section, we first perform the asymptotic analysis of the spherical harmonic DG scheme \cref{eq:bilinear} under the assumption $\veps\rightarrow 0$, and discuss the approximation to the diffusion limit. Based on these observations, we then design a heterogeneous DG method, which can achieve the AP property while with less degree of freedom.

\subsection{Asymptotic analysis}
The main result of this section is the following theorem on proving the AP property.
The proof follows the basic strategy in \cite{GK2010}, where instead the discrete-ordinate method was employed to discretize the angular variable. We remark that to indicate the implicit dependence of $\veps$, we start to use $\bu^{\veps,h}$ instead of $\bu^h$ to denote the solution for the analysis in this section. Similarly, $\bu^{\veps,h}_\ell$ is used to denote the $\ell$th moment of $\bu^{\veps,h}$. 

Define
\begin{equation}\label{eq:def_phi_J}
		\phi^{\veps,h} =\int_{\mathbb{S}} u_{\mathrm{PN}}\ud\bomega \quad \text{ and } \quad 
    \bJ^{\veps,h} =\frac{1}{\veps}\int_{\mathbb{S}}\bomega\, u_{\mathrm{PN}}\ud\bomega 
\end{equation}
so that
\begin{equation}\label{eq:u2phi_J}
   \bu^{\veps,h}_0 
   = \frac{\phi^{\veps,h}}{\sqrt{4\pi}}
   \qquad \text{and} \quad
   \bu^{\veps,h}_1
   = {\sqrt{\frac{3}{4\pi}}}\veps \bP^{-1}\bJ^{\veps,h},
\end{equation}
where the matrix
	\begin{equation}\label{eq:def_P}
		\bP= \begin{bmatrix}
			0 & 0 & 1 \\
			1 & 0 & 0 \\
			0 & 1 & 0
		\end{bmatrix}
	\end{equation}
is needed because the indexing of $\bomega$ and $\bfm_1$ is not consistent.

\begin{theorem}\label{th:asym_lim}
	Let $\bu^{\veps,h}$ be the solution to the spherical harmonic DG method \eqref{eq:bilinear} of the scaled RTE with parameter $\veps>0$. Suppose the assumptions \eqref{as:rte_scale_1}-\eqref{as:rte_scale_2} hold.  
 Then as $\veps\rightarrow 0$,
	\begin{equation}
		\phi^{\veps,h}\rightarrow \phi^{0,h}\in C_0\cap \bV^{h,k_0}_0, \quad \bJ^{\veps,h}\rightarrow \bJ^{0,h}\in \bV^{h,k_1}_1,	
	\end{equation}
	where $\phi^{0,h}$ and $\bJ^{0,h}$ solve the following system: For all $v_0\in C_0\cap \bV^{h,k_0}_0$ and $\bv_1\in \bV^{h,k_1}_1$, 
	\begin{subequations}\label{eq:rte_diff_lim}
		\begin{align}
			\sum_{K\in\mathcal{T}^h}\left\{-(\bJ^{0,h}, \nabla v_0)_K + (\sigma_{\mathrm{a}} \phi^{0,h},v_0)_K\right\} &= 4\pi\sum_{K\in\mathcal{T}^h} (\vint{f},v_0)_K, \label{eq:rte_diff_lim_a}\\
			\sum_{K\in\mathcal{T}^h}\bigg\{\frac{1}{3} \left(\nabla \phi^{0,h}, \bv_1\right)_K
			+ \left(\sigma_{\mathrm{t}} \bJ^{0,h},\bv_1\right)_K \bigg\} &= 0. \label{eq:rte_diff_lim_b}
		\end{align}
	\end{subequations}
\end{theorem}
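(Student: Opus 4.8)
The plan is to run the classical asymptotic-preserving argument: obtain $\veps$-uniform bounds on the discrete solution, extract limits by compactness in the finite-dimensional space $\bV^h$, identify the limiting equations by testing the scheme moment-by-moment, and finally use uniqueness of the limit problem to upgrade subsequential convergence to full convergence. First I would take $\bv^h=\bu^{\veps,h}$ in \eqref{eq:bilinear} and invoke the stability identity \eqref{eq:stab} to get $\|\bu^{\veps,h}\|_{\mathfrak{a}^h}^2=\veps(\bbf,\bu^{\veps,h})$. Writing out $(\bQ\bu^{\veps,h},\bu^{\veps,h})=\veps(\sigma_{\mathrm{a}}\bu_0^{\veps,h},\bu_0^{\veps,h})+\tfrac1\veps\sum_{\ell\ge1}(\sigma_{\mathrm{t}}\bu_\ell^{\veps,h},\bu_\ell^{\veps,h})$ and applying Cauchy--Schwarz with a weighted Young inequality on the right-hand side yields $\|\bu_0^{\veps,h}\|=\mathcal O(1)$, $\|\bu_\ell^{\veps,h}\|=\mathcal O(\veps)$ for $\ell\ge1$, and the jump bound $\tfrac14\sum_K(|\bn^K|\cdot\bD\,\jmp{\bu^{\veps,h}},\jmp{\bu^{\veps,h}})_{\partial K}=\mathcal O(\veps)$. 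By \eqref{eq:u2phi_J} this makes $\|\phi^{\veps,h}\|$ and $\|\bJ^{\veps,h}\|$ bounded uniformly in $\veps$, so along a subsequence $\phi^{\veps,h}\to\phi^{0,h}$ and $\bJ^{\veps,h}\to\bJ^{0,h}$. The jump bound forces $\jmp{\bu_0^{\veps,h}}\to 0$ on every interface (using that the $(0,0)$-entry of $|\bn^K|\cdot\bD$ is bounded below on axis-aligned faces, since each $|\bA^{(i)}|$ has a strictly positive $(0,0)$-entry), whence $\phi^{0,h}$ has no interface jumps, i.e. $\phi^{0,h}\in C_0\cap\bV^{h,k_0}_0$, while $\bu_\ell^{0,h}=0$ for $\ell\ge2$.

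Next I would derive the current relation \eqref{eq:rte_diff_lim_b} by testing \eqref{eq:bilinear} with $\bv^h=(0,\bv_1,0,\dots,0)$ for arbitrary $\bv_1\in\bV^{h,k_1}_1$, isolating the degree-one row. By the block-tridiagonal structure \eqref{eq:A_struc} the volume term is $-\sum_K\sum_i(\bA^{(i)}_{1,0}\bu_0^{\veps,h}+\bA^{(i)}_{1,2}\bu_2^{\veps,h},\partial_i\bv_1)_K$; since $\bu_2^{\veps,h}=\mathcal O(\veps)$ the second contribution vanishes, and the diagonal block $\tfrac{\sigma_{\mathrm{t}}}\veps(\bu_1^{\veps,h},\bv_1)$ has a finite limit because $\tfrac1\veps\bu_1^{\veps,h}\to\sqrt{3/(4\pi)}\,\bP^{-1}\bJ^{0,h}$. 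The numerical-flux boundary term converges to the flux of the continuous limit $\bu_0^{0,h}=\phi^{0,h}/\sqrt{4\pi}$, since its dissipative part $\tfrac12|\bn^K|\cdot\bD\,\jmp{\bu^{\veps,h}}$ dies with the jumps; combined with the volume term and an integration by parts, all interface contributions cancel thanks to the continuity of $\phi^{0,h}$, leaving the purely local form. The spherical-harmonic identities for $\bA^{(i)}_{1,0}$ and the reordering matrix $\bP$ then turn this into the coefficient $\tfrac13$ of \eqref{eq:rte_diff_lim_b}.

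For the balance law \eqref{eq:rte_diff_lim_a} I would test with $\bv^h=(v_0,0,\dots,0)$ but crucially restrict to \emph{continuous} $v_0\in C_0\cap\bV^{h,k_0}_0$. Because the numerical flux \eqref{eq:flux_Pn} is single-valued across faces and $v_0^-$ agrees from both sides (and opposite periodic faces are identified), the entire boundary sum telescopes to zero; only the volume term $-\sum_K\sum_i(\bA^{(i)}_{0,1}\bu_1^{\veps,h},\partial_i v_0)_K$ and $(\veps\sigma_{\mathrm{a}}\bu_0^{\veps,h},v_0)$ survive, and both are $\mathcal O(\veps)$. Dividing the degree-zero row by $\veps$ and letting $\veps\to0$, using $\tfrac1\veps\bu_1^{\veps,h}\to\sqrt{3/(4\pi)}\,\bP^{-1}\bJ^{0,h}$, $\bu_0^{\veps,h}\to\phi^{0,h}/\sqrt{4\pi}$, and $\bbf_0=\sqrt{4\pi}\,\vint{f}$, the matrix identity $\sqrt{3/(4\pi)}\,\bA^{(i)}_{0,1}\bP^{-1}=\tfrac1{\sqrt{4\pi}}$ times the $i$-th unit row vector converts the volume term into $-\tfrac1{\sqrt{4\pi}}\sum_K(\bJ^{0,h},\nabla v_0)_K$; multiplying through by $\sqrt{4\pi}$ recovers \eqref{eq:rte_diff_lim_a}.

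Finally, eliminating $\bJ^{0,h}$ from \eqref{eq:rte_diff_lim_b} and substituting into \eqref{eq:rte_diff_lim_a} gives a symmetric, coercive (using $\sigma_{\mathrm{a}}\ge\sigma_{\mathrm{a}}^{\mathrm{min}}>0$ and $\sigma_{\mathrm{t}}\ge\sigma_{\mathrm{s}}^{\mathrm{min}}>0$) discrete diffusion problem for $\phi^{0,h}\in C_0\cap\bV^{h,k_0}_0$, which is uniquely solvable by Lax--Milgram; uniqueness of the limit then promotes the subsequential convergence to convergence of the whole family. I expect the main obstacle to lie in the interplay of the last three steps: showing the scalar-flux jumps vanish at the correct rate so that $\phi^{0,h}$ is genuinely continuous, and then exploiting that continuity together with conservativity of the numerical flux so that all interface terms disappear at exactly the right order in $\veps$ — in particular dividing the degree-zero row by $\veps$ without any term blowing up. The bookkeeping of the spherical-harmonic matrix identities that produce the constants $\tfrac13$ and $4\pi$ through the reordering matrix $\bP$ is routine but must be carried out carefully.
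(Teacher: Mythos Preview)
Your proposal is correct and follows essentially the same route as the paper: energy identity via \eqref{eq:stab} to get uniform bounds and the $\mathcal O(\veps)$ jump control, compactness in the finite-dimensional space, then testing the scheme with $\bv^h=\veps^{-1}v_0\mathbf{e}_0$ for continuous $v_0$ (killing the interface sum by conservativity) and with $\bv^h=(0,\bv_1,0,\dots,0)$, together with the explicit spherical-harmonic identities for $\bA^{(i)}_{0,1}$ and $\bP$ to obtain the constants. Your final uniqueness step (via coercivity of the eliminated discrete diffusion problem) to upgrade subsequential to full convergence is a useful addition that the paper leaves implicit.
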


\begin{proof}
	The first step is to determine bounds for $\bu^{\veps,h}$.
	Setting $v=\bu^{\veps,h}$ in \eqref{eq:bilinear} with the bilinear form \eqref{eq:a_b} and the flux \eqref{eq:flux_Pn}, and employing the equality \eqref{eq:stab} yield the energy equation
	\begin{multline}\label{eq:rte_dg_energy}
		\sum_{K\in\mathcal{T}^h} \bigg\{\frac{1}{4}\big(|\bn^K|\cdot\bD\,[\![\bu^{\veps,h}]\!], [\![\bu^{\veps,h}]\!]\big)_{\partial K}\\
		+\veps (\sigma_{\mathrm{a}}\bu^{\veps,h}, \bu^{\veps,h})_K + \left(\left(\frac{\sigma_{\mathrm{t}}}{\veps}-\veps\sigma_{\mathrm{a}}\right) \bR\bu^{\veps,h}, \bu^{\veps,h}\right)_K\bigg\} = \veps \sum_{K\in\mathcal{T}^h}(\bbf,\bu^{\veps,h})_K.
	\end{multline} 
	Since the first and the third terms on the left-hand side of the above equation are non-negative and $\sigma_{\mathrm{a}}>\sigma_{\mathrm{a}}^{\mathrm{min}}$, it follows that  
	\begin{equation}
	\sigma_{\mathrm{a}}^{\mathrm{min}} \|\bu^{\veps,h}\|^2
  \le 
  \sum_{K\in\mathcal{T}^h}(\bbf,\bu^{\veps,h})_K \le \frac{1}{2}\sum_{K\in\mathcal{T}^h}(\frac{1}{\sigma_{\mathrm{a}}^{\mathrm{min}} }\bbf, \bbf)_K + \frac{1}{2}\sum_{K\in\mathcal{T}^h}(\sigma_{\mathrm{a}}^{\mathrm{min}} \bu^{\veps,h}, \bu^{\veps,h})_K.
	\end{equation}
	Therefore $\sigma_{\mathrm{a}}^{\mathrm{min}} \|\bu^{\veps,h}\|^2 \lesssim \|\bbf\|_{L^2(X)}$, 
	i.e., the sequence $\{\bu^{\veps,h}\}_{\veps>0}$ is uniformly bounded. Therefore it has a convergent sub-sequence, for the ease of presentation, still denoted by $\{\bu^{\veps,h}\}_{\veps>0}$ with limit $\bu^{0,h}$.
	Next, since 
	\begin{equation}
		\sigma_{\mathrm{s}}^{\mathrm{min}} \|\bR\bu^{\veps,h}\|^2_X\le\sum_{K\in\mathcal{T}^h}((\sigma_{\mathrm{t}} - \veps^2 \sigma_{\mathrm{a}}) \bR\bu^{\veps,h}, \bu^{\veps,h})_K\le \veps^2 \sum_{K\in\mathcal{T}^h}(\bbf,\bu^{\veps,h})_K,
	\end{equation}
	it follows $\|\bR\bu^{\veps,h}\|^2_X \to 0$ as $\veps\rightarrow 0$.
	Because $\bR=\diag(0,1,1,\cdots,1,1)$ and \eqref{eq:u2phi_J}, we conclude that $\bu^{0,h}_\ell=0$ for all $\ell\ge 1$ and that $\bJ^{\veps,h}$ is bounded and converges to a sub-sequential limit $\bJ^{0,h}$. 
    Finally \eqref{eq:rte_dg_energy} also implies that 
    \begin{equation}
       \sum_{K\in\mathcal{T}^h}(|n_i|\bD^{(i)}[\![\bu^{\veps,h}]\!], [\![\bu^{\veps,h}]\!])_{\partial K} \le 4 \veps \sum_{K\in\mathcal{T}^h}(\bbf,\bu^{\veps,h})_K.
    \end{equation}
	As a result, $[\![\bu^{0,h}]\!]=\lim_{\veps\rightarrow 0}[\![\bu^{\veps,h}]\!] = 0$. In particular, $u^{0,h}_0$ is continuous.
	
	The next step is to find equations for $\phi^{\veps,h}$ and $\bJ^{\veps,h}$ in the limit. 
    By setting
    \(\bv^h=\veps^{-1}v_0\mathbf{e}_0\) in \eqref{eq:bilinear}, where $v_0\in C_0\cap \bV^{h,k_0}_0$ and $\mathbf{e}_0 = \begin{bmatrix} 1 & 0 & 0 & \cdots & 0 \end{bmatrix}^{\tran} \in \mathbb{R}^L$, 
    \begin{multline} \label{eq:3.10}
		\sum_{K\in\mathcal{T}^h}\bigg\{\sum_{i=1}^3 \left( n_i\overrightarrow{\bA^{(i)}\bu^{\veps,h}}, \veps^{-1}v_0^-\mathbf{e}_0 \right)_{\partial K} - \sum_{i=1}^3\left(\bA^{(i)}\bu^{\veps,h}, \left(\veps^{-1}\partial_i v_0\right)\mathbf{e}_0\right)_K\\
		+ \frac{1}{\sqrt{4\pi}}  (\sigma_{\mathrm{a}}\phi^{\veps,h},v_0)_K  - \sqrt{4\pi} (\Vint{f},v_0)_K\bigg\}=0.
    \end{multline}
        From \eqref{eq:A_struc} and \eqref{eq:u2phi_J}, we have
	\begin{equation}
		\bA^{(i)}\bu^{\veps,h} \cdot \mathbf{e}_0 = \bA^{(i)}_{0,1} \bu^{\veps,h}_1 = {\sqrt{\frac{3}{4\pi}}}\veps \bA^{(i)}_{0,1} \bP^{-1}\bJ^{\veps,h}.  
	\end{equation}     
    Noting the definition of \(\bJ^{\veps,h}\) in \eqref{eq:def_phi_J} and \(\bP\) in \eqref{eq:def_P}, and  \(\bA^{(i)}_{0,1} = \int_{\mathbb{S}}\omega_i \bfm_1\bfm^{\tran}_0\ud\bomega = \frac{1}{\sqrt{3}}\begin{bmatrix}
			\delta_{i,2} & \delta_{i,3} & \delta_{i,1}
		\end{bmatrix}^{\tran}\), we deduce by direct computation that
	\begin{equation}
		\bA^{(i)}_{0,1} \bP^{-1} \bJ^{\veps,h} = \frac{1}{\sqrt{3}} \left(\bJ^{\veps,h}\right)^i,  
	\end{equation} 
    where \(\left(\bJ^{\veps,h}\right)^i\) indicates the \(i\)th component of the vector \(\bJ^{\veps,h}\). Therefore, 
	\begin{equation}
		 \sum_{i=1}^3\left(\bA^{(i)}\bu^{\veps,h}, \left(\veps^{-1}\partial_i  v_0\right)\mathbf{e}_0\right)_K = \frac{1}{\sqrt{4\pi}}\sum_{i=1}^3\left( \left(\bJ^{\veps,h}\right)^i, \partial_i  v_0\right)_K = \frac{1}{\sqrt{4\pi}}(\bJ^{\veps,h},\nabla v_0)_K.
	\end{equation}     
	Since $v_0$ is continuous, and therefore $\sum_{K\in\mathcal{T}^h}\sum_{i=1}^3 \left(\overrightarrow{\bA^{(i)}\bu^{\veps,h}}, n_i\veps^{-1}v_0^-\mathbf{e}_0\right)_{\partial K}=0$, the equation \eqref{eq:3.10} reduces to
	\begin{equation}
		\sum_{K\in\mathcal{T}^h}\left\{-(\bJ^{\veps,h},\nabla v_0)_K + (\sigma_{\mathrm{a}} \phi^{\veps,h},v_0)_K\right\} = 4\pi \sum_{K\in\mathcal{T}^h} (\Vint{f},v_0)_K.
	\end{equation}
    By using the fact that $\bJ^{\veps,h}\rightarrow\bJ^{0,h}$ and $\phi^{\veps,h}\rightarrow \phi^{0,h}$, respectively, we can now pass to the limit as $\veps\rightarrow 0$ to obtain
	\begin{equation} \label{added1}
		\sum_{K\in\mathcal{T}^h}\left\{-(\bJ^{0,h}, \nabla v_0)_K + (\sigma_{\mathrm{a}} \phi^{0,h},v_0)_K\right\} = 4\pi\sum_{K\in\mathcal{T}^h} (\Vint{f},v_0)_K.
	\end{equation}
	Next, setting $\bv^h=\begin{bmatrix} 0 & \bv_1^{\tran} & 0 & \cdots & 0 \end{bmatrix}^{\tran} \in \mathbb{R}^L$ in \eqref{eq:bilinear} with $\bv_1\in \bV_1^{h,k_1}$ yields 
	\begin{multline} \label{eq3.16}
		\sum_{K\in\mathcal{T}^h}\bigg\{\sum_{i=1}^3 \left( n_i\overrightarrow{\bA^{(i)}\bu^{\veps,h}}, \bv^{h-} \right)_{\partial K} - \sum_{i=1}^3\left(\bA^{(i)}\bu^{\veps,h},\partial_i \bv^h\right)_K\\
		+ \sqrt{\frac{3}{4\pi}} \veps^2 (\sigma_{\mathrm{a}}\bP^{-1}\bJ^{\veps,h},\bv_1)_K + \sqrt{\frac{3}{4\pi}}((\sigma_{\mathrm{t}} - \veps^2\sigma_{\mathrm{a}}) \bP^{-1} \bJ^{\veps,h},\bv_1)_K - \veps (\bbf,\bv_1)_K\bigg\}=0.
	\end{multline}
    Since 
 \begin{equation}
     \sum_{i=1}^{3} \bP \bA^{(i)}_{1,0}  \partial_i \phi^{\veps,h} =  \frac{1}{\sqrt{3}}\nabla \phi^{\veps,h} 
 \end{equation}
and \(\bA^{(i)}_{1,0} = \int_{\mathbb{S}}\omega_i \bfm_1\bfm^{\tran}_0\ud\bomega = \frac{1}{\sqrt{3}}\begin{bmatrix}
			\delta_{i,2} & \delta_{i,3} & \delta_{i,1}
		\end{bmatrix}^{\tran}\), 
	it follows that 
	\begin{align}
		\sum_{i=1}^3 &\left(\left( n_i\overrightarrow{\bA^{(i)}\bu^{\veps,h}}, \bv^{h-} \right)_{\partial K} - \left( \bA^{(i)}\bu^{\veps,h},\partial_i \bv^h\right)_K \right)  \nonumber\\
		&= \sum_{i=1}^3 \left(\left( n_i\overrightarrow{\bA^{(i)}_{1,0}\bu^{\veps,h}_0} + n_i\overrightarrow{\bA^{(i)}_{1,2}\bu^{\veps,h}_2}, \bv^-_1 \right)_{\partial K} - \left( \bA^{(i)}_{1,0}\bu^{\veps,h}_0 + \bA^{(i)}_{1,2}\bu^{\veps,h}_2,\partial_i \bv_1\right)_K \right) \nonumber\\
		&= \sum_{i=1}^3 \bigg(\left( \frac{1}{\sqrt{4\pi}}\bA^{(i)}_{1,0}\phi^{\veps,h}, n_i\bv^-_1 \right)_{\partial K} - \left( \frac{1}{\sqrt{4\pi}}\bA^{(i)}_{1,0}\phi^{\veps,h},\partial_i \bv_1\right)_K\bigg) \nonumber\\
		&\phantom{=} \quad + \sum_{i=1}^3 \bigg(\left( n_i\overrightarrow{\bA^{(i)}_{1,2}\bu^{\veps,h}_2}, \bv^-_1 \right)_{\partial K} - \left( \bA^{(i)}_{1,2}\bu^{\veps,h}_2,\partial_i \bv_1\right)_K \bigg) \nonumber\\
		&= \sqrt{\frac{1}{12\pi}} (\bP^{-1}\nabla \phi^{\veps,h}, \bv_1)_K + \sum_{i=1}^{3} \left(\left( n_i\overrightarrow{\bA^{(i)}_{1,2}\bu^{\veps,h}_2}, \bv^-_1 \right)_{\partial K} - \left( \bA^{(i)}_{1,2}\bu^{\veps,h}_2,\partial_i \bv_1\right)_K \right). \label{eq:3.18}
	\end{align}
	Plugging \eqref{eq:3.18} in \eqref{eq3.16}, letting $\veps\rightarrow 0$ and noting $\bu^{0,h}_2= 0$ yield the limit equation 
	 	\begin{align} \label{added2}
	 		\sum_{K\in\mathcal{T}^h}\bigg\{\frac{1}{3} (\nabla \phi^{0,h}, \bv_1)_K
	 		+ (\sigma_{\mathrm{t}} \bJ^{0,h},\bv_1)_K \bigg\} &= 0.
	 	\end{align}
Together \eqref{added1} and \eqref{added2} recover \eqref{eq:rte_diff_lim}, and the proof is complete.
\end{proof}
Theorem \ref{th:asym_lim} states that $\phi^{\veps,h}$ converges to the function 
$\phi^{0,h}\in C_0\cap \bV_0^{h,k_0}$, which solves the system \eqref{eq:rte_diff_lim}. 
This system is a mixed discretization of the diffusion equation \eqref{eq:diffusion} with a continuous primal variable. 
An obvious conclusion from the above theorem is that we require $k_0\ge 1$ to ensure that $C_0\cap \bV_0^{h,k_0}$ is not a trivial space only consisting of constant functions. Therefore, we have the following corollary.
\begin{corollary}
	 $k_0\ge 1$ is a necessary condition for the solution $\phi^{\veps,h}$ converges to the correct limit solution
\end{corollary}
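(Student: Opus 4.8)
The plan is to prove the contrapositive: I will show that taking $k_0=0$ forces the limit $\phi^{0,h}$ furnished by \Cref{th:asym_lim} to be a global constant, which cannot coincide with a generic (non-constant) solution of the diffusion equation \eqref{eq:diffusion}.

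First I would read off from \Cref{th:asym_lim} that the limiting scalar flux lies in $C_0\cap\bV_0^{h,k_0}$. Since $\bV_0^{h,k_0}=[V^{h,k_0}]^1$ is the scalar DG space of functions that are polynomials of degree at most $k_0$ on each element, this intersection is exactly the space of globally continuous, elementwise-polynomial functions of degree $\le k_0$.

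Then I would specialize to $k_0=0$, where $V^{h,0}$ consists of elementwise constants. A function in $C_0\cap\bV_0^{h,0}$ is therefore constant on each $K\in\mathcal{T}^h$ and continuous across interfaces. Because $X$ is the connected torus and any two elements of $\mathcal{T}^h$ are linked by a chain of face-adjacent neighbors, continuity propagates a single value across every shared face, so the function is globally constant. Hence $C_0\cap\bV_0^{h,0}$ reduces to the constants, and $\phi^{0,h}$ must be constant. Comparing with \eqref{eq:diffusion}, a constant $\phi\equiv c$ solves the diffusion equation only when $\sigma_{\mathrm{a}}c=\vint{f}$ holds pointwise, i.e. only in the degenerate case $\vint{f}/\sigma_{\mathrm{a}}\equiv\mathrm{const}$; for generic data the true limit is non-constant, so the $k_0=0$ scheme cannot converge to it. This establishes that $k_0\ge1$ is necessary.

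The only step beyond bookkeeping is the identification $C_0\cap\bV_0^{h,0}=\{\text{constants}\}$, which relies on connectivity of the mesh; everything else follows directly from \Cref{th:asym_lim}.
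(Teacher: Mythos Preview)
Your argument is correct and matches the paper's reasoning exactly: the paper states just before the corollary that $k_0\ge 1$ is required so that $C_0\cap\bV_0^{h,k_0}$ is not the trivial space of constants, and you have spelled out this observation (including the connectivity step and the comparison with \eqref{eq:diffusion}) in more detail than the paper does.
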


\subsection{Diffusion limit}\label{sec3.2}
Let us further consider the limit equations \eqref{eq:rte_diff_lim} of the system \eqref{eq:bilinear}. Assuming constant cross-sections on each element, 
for the first and second moments $\phi^{0,h}$ and $\bJ^{0,h}$, let us focus on $\mathbb{Q}_1$ elements on rectangular meshes. Choose $\bW_0^{h}:=C_0\cap \bV_0^{h,1}$ (i.e., continuous and piecewise $\mathbb{Q}_1$) and $\bW^h_1:=\bV_1^{h,1}$ (piecewise $\mathbb{Q}_1$). If $v_0\in \bW_0^{h}$, then $\bv_1:=\nabla v_0\in \bW^h_1$. Substitution yields a continuous Galerkin method for the diffusion equation \eqref{eq:diffusion}:
\begin{equation}\label{eq:diff_fem}
	\sum_{K\in\mathcal{T}^h}\left\{\frac{1}{3\sigma_{\mathrm{t}}} \left(\nabla \phi^{0,h}, \nabla v_0\right)_K + \sigma_{\mathrm{a}}( \phi^{0,h},v_0)_K\right\} = 4\pi\sum_{K\in\mathcal{T}^h} (\Vint{f},v_0)_K.
\end{equation}
It can be shown \cite{C1978} that this method \eqref{eq:diff_fem} converges with the order of $\mathcal{O}(h)$ in the standard $H^1$ norm. This shows the AP property of the spherical harmonic DG method, i.e., the limit (as $\veps\to 0$) of the discretization is the discretization of the limit equation. The same conclusion can also be drawn when $\mathbb{P}_1$ elements are used on triangular meshes.  

 We could also use lower order elements for $\bJ^{0,h}$. If we keep $\bW^h_0=C_0\cap \bV^{h,1}_0$, but take $\bW^h_1=\bV^{h,0}_1$ (the piecewise constant space) instead. On triangular meshes, we can take $\bV^{h,0}_1$ to be the piecewise $\mathbb{P}_1$ space, hence, we still have $\bv_1:=\nabla v_0\in \bW^h_1$ and following the same procedure of substitution gives the same limit equation \eqref{eq:diff_fem}, which implies the AP property on triangular meshes under this setup of approximation spaces. 
In fact, in the case that $k_0=1$ and $k_1 = 0$, the system \eqref{eq:rte_diff_lim} is a mixed discretization of the steady-state $P_1$ equations (i.e. the diffusion equation in first-order form), using the method from \cite{ES2012}. 

On rectangular meshes, we take $\bV_0^{h,1}$ to be the piecewise $\mathbb{Q}_1$ space. Since $\nabla v_0$ belongs to the piecewise $\mathbb{P}_1$ space and is no longer an element of $\bW^h_1=\bV^{h,0}_1$, we choose $\bv_1 := \overline{\nabla v_0}\in \bW^h_1$ in \eqref{eq:rte_diff_lim_b}, where $\overline{\nabla v_0}=(\nabla v_0,1)_K$ is the cell average.
Since $\bJ^{0,h}\in \bV^1_{h,0}$, we can obtain
\begin{equation}
	\left(\bJ^{0,h},\nabla v_0\right)_K=\bJ^{0,h}\left(1,\nabla v_0\right)_K=\left(\bJ^{0,h},\overline{\nabla v_0}\right)_K.
\end{equation}
Substituting these again into \eqref{eq:rte_diff_lim_a} to obtain
\begin{equation}\label{eq:diff_fem_avg}
	\sum_{K\in\mathcal{T}^h}\left\{\frac{1}{3\sigma_{\mathrm{t}}} \left(\nabla \phi^{0,h}, \overline{\nabla v_0}\right)_K + \sigma_{\mathrm{a}}( \phi^{0,h},v_0)_K\right\} = 4\pi\sum_{K\in\mathcal{T}^h} (\Vint{f},v_0)_K,
\end{equation}
or equivalently
\begin{equation}\label{eq:diff_fem_avgnew}
	\sum_{K\in\mathcal{T}^h}\left\{\frac{1}{3\sigma_{\mathrm{t}}} \left(\overline{\nabla \phi^{0,h}}, \overline{\nabla v_0}\right)_K + \sigma_{\mathrm{a}}( \phi^{0,h},v_0)_K\right\} = 4\pi\sum_{K\in\mathcal{T}^h} (\Vint{f},v_0)_K.
\end{equation}
For the one-dimensional model, it is easy to observe that \cref{eq:diff_fem_avg} is equivalent to \cref{eq:diff_fem}. For the multi-dimensional case, we can show that this method also converges with the order of $\mathcal{O}(h)$ in the standard $H^1$ norm. The proof follows from the standard error estimate in $H^1$ norm for \cref{eq:diff_fem}, combined with the fact that $\| \nabla \phi^{0,h} - \overline{\nabla \phi^{0,h}} \| = \mathcal{O}(h)$, where the bar denotes the cell-wise average.  The detailed steps are skipped here to save space. Therefore, the AP property of the spherical harmonic DG method can be achieved when lower order elements are used for $\bJ^{0,h}$, and this holds for both rectangular and triangular meshes. 

\subsection{A discontinuous Galerkin method with heterogeneous polynomial spaces for different moments}\label{sec:err_ana^heter}
In conventional spherical harmonics DG methods, it is standard that $k_i\equiv k$ for all $0\le i< N$. Furthermore, in order to capture the diffusion limit, local $\mathbb{P}_1$ or $\mathbb{Q}_1$ approximations for triangular or rectangular elements have to be employed, i.e., $k\ge 1$. 
However, a key observation from the asymptotic analysis for the DG method in \cref{th:asym_lim} and the discussion in Section \ref{sec3.2} indicates that, in order for numerical solutions of the DG method to have the correct asymptotic limit, the requirement $k\ge 1$ can be placed on the spaces $\bV_0^{h}$ only. This means that non-constant elements have to be employed only for the space $\bV^{h,k_0}_0$, i.e., $k_0\ge 1$, while a trivial space only consisting of constant $\mathbb{P}_0$ elements for simplices (and $\mathbb{P}_0$ or $\mathbb{P}_1$ for cuboids) can still be used for higher moments. 
This fact can be leveraged to save memory, hence the computational cost, while maintaining the property of AP and uniform convergence. For example, we could use $\mathbb{P}_1$ or $\mathbb{Q}_1$ elements for $\bu_0$ only, and $\mathbb{P}_0$ elements for all higher moments. Therefore, the improved DG method with heterogeneous polynomial spaces can be cast as follows. 
\begin{prob}\label{prob:SH_DG^heter}
	For $k_0\ge 1$, find $\bu^h\in \bV^{h,k_0}_0\times \bV^{h,0}_1\times \cdots \times \bV^{h,0}_N$ such that
	\begin{equation}\label{eq:bilinear^heter}
		\mathfrak{a}^h(\bu^h,\bv^h) = \mathfrak{f}(\bv^h) \quad \forall \bv^h\in \bV^{h,k_0}_0\times \bV^{h,0}_1\times \cdots \times \bV^{h,0}_N.
	\end{equation}
\end{prob}

\begin{remark}
For the proposed DG methods with heterogeneous polynomial spaces, high accuracy can be expected in the case of highly diffusive regions ($\veps\ll 1$). However, when $\veps = \mathcal{O}(1)$, the hybrid method will lose the high order accuracy due to the $\mathbb{P}_0$ elements employed. In fact, for simplicial elements, a uniform convergence (with respect to $\veps$) of order $\mathcal{O}(h^{1/2})$ can be proved when $\mathbb{P}_1$ and $\mathbb{P}_0$ elements are employed for $\bu_0$ and higher moments, respectively. The proof can be studied following the approach in \cite{SHX2023} and is neglected here to save space.
\end{remark}

\section{A hybrid discontinuous Galerkin/finite volume method}\label{sec:err_ana_hybrid}
The proposed DG method with heterogeneous polynomial spaces in the previous section can save computational costs but might lose convergence order when $\veps = \mathcal{O}(1)$. 
To alleviate such a drawback, we proposed a hybrid method, 
using reconstruction techniques from FV methods for all the 
$\mathbb{P}_0$ elements approximating higher moments. 

The proposed hybrid methods are a combination of DG methods and FV methods. 
For the unknown variables, the first moment of $\bu^h$ is approximated by a piecewise polynomial from the DG solution space, and all the other moments of $\bu^h$ are approximated
by a piecewise constant in the FV fashion. 
At each cell interface, we can compute the cell boundary values of $\bu^h$ via either polynomial evaluation (for the DG component) or reconstruction procedure (for the FV component). 
Numerical fluxes at the cell interface are then computed based on these values.
Below, we take the 1-D slab geometry model \eqref{eq:1d} as an example to present the approach.
The derivation for the 2-D model or fully RTE follows the same approach, and we provide some details and implementations of the algorithm for the 2-D model in \Cref{sec:appx}.

In 1-D slab geometry settings, the RTE model reduces to \eqref{eq:1d}, and the corresponding $P_N$ equation takes the form 
\begin{equation}\label{added3}
	\bB\partial_z\bu + \veps\sigma_{\mathrm{a}}\bu + \left(\frac{\sigma_{\mathrm{t}}}{\veps}-\veps\sigma_{\mathrm{a}}\right)\bR\bu = \veps \bbf.
\end{equation}
See \Cref{sec:appendix_PN_1d} for more details. 
We take the domain $I=(0,1)\subset \mathbb{R}$.
Let $\mathcal{T}^h$ be a regular family of meshes of $I$, i.e., $I:=\cup_{i=1}^M I_i$ with $I_i=(z_{i-1/2},z_{i+1/2})$. 
Denote $z_i=(z_{i-\frac{1}{2}}+z_{i+\frac{1}{2}})/2$, $h_i=z_{i+1/2}-z_{i-1/2}$ and $h:=\max_{1\le i\le M} h_i$. Let $\mathcal{E}^h$ be the set of all interior points of $\mathcal{T}$. 
The value of $\bu^h$ at $z=i+1/2$ is denoted by $(\bu^h)_{i+1/2}$ or just $\bu^h_{i+1/2}$. 

Let us set $\bu^h_0\in \bV^{h,1}_0$. Then for each cell $I_i$,
\begin{equation}
\label{eq:u0_linear}
\bu^h_{0}|_{I_i} = \overline{\bu}_{0,i} + \widetilde{\bu}_{0,i} \xi_i \in \mathbb{P}_1,
\quad \text{where $\xi_i=\frac{2(z-z_i)}{h_i}\in [-1,1]$}. 
\end{equation}
For higher moments, let $\bu^h_{\ell}\in \bV^{h,0}_\ell$. Then for each cell $I_i$,
\begin{equation}
\bu^h_{\ell}|_{I_i} = \overline{\bu}_{\ell,i}  \in [\mathbb{P}_0]^{2\ell+1}, \quad \ell=1,\cdots,N.
\end{equation}

Integration of \eqref{added3} over $I_i$ gives
\begin{equation}
\frac{1}{h_i}\left(\overrightarrow{\bB\bu}_{i+\frac{1}{2}} -\overrightarrow{\bB\bu}_{i-\frac{1}{2}}\right) + \veps\sigma_{\mathrm{a}}\overline{\bu}_i + \left(\frac{\sigma_{\mathrm{t}}}{\veps}-\veps\sigma_{\mathrm{a}}\right)\bR\overline{\bu}_i = \veps \overline{\bbf}_i.
\end{equation}
Here $\overrightarrow{\bB\bu}_{i+\frac{1}{2}}$ and $-\overrightarrow{\bB\bu}_{i-\frac{1}{2}}$ are the numerical fluxes and $\overline{\bbf}_i={\frac{1}{h_i}}\int_{I_i}\bbf\ud z$. 
Setting $\overrightarrow{\bB\bu}_{i+\frac{1}{2}}= \bB\avg{\bu}_{i+\frac{1}{2}} -\frac{1}{2}\bD[\![\bu]\!]_{i+\frac{1}{2}}$, where \(\bD =|\bB| \) is a positive definite matrix, gives
\begin{multline}
\label{eq:conservation_law_slab}
	\frac{1}{h_i}\Big(\bB\avg{\bu}_{i+\frac{1}{2}} -\frac{1}{2}\bD[\![\bu]\!]_{i+\frac{1}{2}}\Big) -\frac{1}{h_i}\Big(\bB\avg{\bu}_{i-\frac{1}{2}} + \frac{1}{2}\bD[\![\bu]\!]_{i-\frac{1}{2}}\Big) \\
	  + \veps\sigma_{\mathrm{a}}\overline{\bu}_i + \left(\frac{\sigma_{\mathrm{t}}}{\veps}-\veps\sigma_{\mathrm{a}}\right)\bR\overline{\bu}_i = \veps \overline{\bbf}_i,
\end{multline}

The edge values to define jumps and averages in \eqref{eq:conservation_law_slab} are determined using \eqref{eq:u0_linear} for $\ell=0$ and local reconstructions for $\ell >0$.  In the $\ell=0$ case, an equation for $\widetilde{\bu}_{0,i}$ is required.  To derive it, we multiply the $\ell=0$ component of \eqref{added3}  by $\xi_i$ and integrate over $I_i$.  
With the definition of the numerical fluxes above, the result is
\begin{multline}
	\frac{3}{h_i}\bigg(\Big(\bB\avg{\bu}_{i+\frac{1}{2}} -\frac{1}{2}\bD[\![\bu]\!]_{i+\frac{1}{2}}\Big)_0 +\Big(\bB\avg{\bu}_{i-\frac{1}{2}} + \frac{1}{2}\bD[\![\bu]\!]_{i-\frac{1}{2}}\Big)_0\bigg)\\
	-\frac{6}{h_i} (\bB\overline{\bu}_i)_0 + \veps\sigma_{\mathrm{a}}\widetilde{\bu}_{0,i}+ \left(\frac{\sigma_{\mathrm{t}}}{\veps}-\veps\sigma_{\mathrm{a}}\right) \bR\widetilde{\bu}_{0,i}= \veps \widetilde{\bbf}_{0,i},
\end{multline}
where, for notational convenience, we often use an outer subscript to extract the $\ell=0$ component of a vector, e.g. $(\bB\overline{\bu}_i)_0 = (\bB\overline{\bu})_{0,i}$.  

For $\ell>0$, we only know the cell average value of $\bu_\ell^h$. To recover uniform second-order accuracy, we employ Fromm's method to approximate slopes with center differences to find edge values and then apply upwind fluxes at cell interfaces. Let us assume a uniform mesh size $h$. Then 
\begin{equation}
	\bu^h_\ell\big|_{I_i} = \overline{\bu}_{\ell,i} + \frac{\overline{\bu}_{\ell,i+1} - \overline{\bu}_{\ell,i-1}}{2h}(z-z_i), \quad {\ell>0},
\end{equation}
so that {for \(\ell>0\), }
\begin{equation}
	\bu^{h-}_\ell\big|_{z_{i+\frac{1}{2}}} = \overline{\bu}_{\ell,i}+ \frac{\overline{\bu}_{\ell,i+1} - \overline{\bu}_{\ell,i-1}}{4}, \quad
	\bu^{h+}_\ell\big|_{z_{i+\frac{1}{2}}} = \overline{\bu}_{\ell,i+1} - \frac{\overline{\bu}_{\ell,i+2} - \overline{\bu}_{\ell,i}}{4}.
\end{equation}
Hence, for \(\ell>0\), 
\begin{subequations}\label{eq:fromm_1d}
	\begin{equation}
		\avg{\bu^h_\ell}_{i+\frac{1}{2}} = \frac{-\overline{\bu}_{\ell,i+2}+5\overline{\bu}_{\ell,i+1}+5\overline{\bu}_{\ell,i}-\overline{\bu}_{\ell,i-1}}{8},
	\end{equation}
	\begin{equation}
		[\![\bu^h_\ell]\!]_{i+\frac{1}{2}} = \frac{-\overline{\bu}_{\ell,i+2} + 3\overline{\bu}_{\ell,i+1} - 3\overline{\bu}_{\ell,i} + \overline{\bu}_{\ell,i-1}}{4},
	\end{equation}
	\begin{equation}
		[\![\bu^h_\ell]\!]_{i-\frac{1}{2}} = \frac{\overline{\bu}_{\ell,i+1} - 3\overline{\bu}_{\ell,i} + 3\overline{\bu}_{\ell,i-1} - \overline{\bu}_{\ell,i-2}}{4}.
	\end{equation}
\end{subequations}
Combining \eqref{eq:fromm_1d}, the mixed spherical harmonics DG method for the 1-D geometry setting can be cast as follows.
\begin{prob}
Find $\bu^h\in \bV^h := \bV^{h,1}_0\times\displaystyle\prod_{\ell=1}^N \bV^{h,0}_\ell$ such that 
	\begin{subequations}\label{eq:rte_dg_mixed_1d}
		\begin{align}
			\sum_{i=1}^M\Bigg\{\frac{1}{h}\Big(\bB\avg{\bu^h}_{i+\frac{1}{2}} -\frac{1}{2}\bD[\![\bu^h]\!]_{i+\frac{1}{2}}\Big)
			-\frac{1}{h}\Big(\bB\avg{\bu^h}_{i-\frac{1}{2}}
			+ \frac{1}{2}\bD[\![\bu^h]\!]_{i-\frac{1}{2}}\Big) \notag  \\
			+ \veps\sigma_{\mathrm{a}}\overline{\bu}^h_i + \left(\frac{\sigma_{\mathrm{t}}}{\veps}-\veps\sigma_{\mathrm{a}}\right) \bR\overline{\bu}^h_i \Bigg\}= \sum_{i=1}^M\veps \overline{\bbf}_{i},
		\end{align}
		\begin{align}
			\sum_{i=1}^M\Bigg\{\frac{3}{h}\Big(\bB\avg{\bu^h}_{i+\frac{1}{2}} -\frac{1}{2}\bD[\![\bu^h]\!]_{i+\frac{1}{2}}\Big)_0
			+\frac{3}{h}\Big(\bB\avg{\bu^h}_{i-\frac{1}{2}} + \frac{1}{2}\bD[\![\bu^h]\!]_{i-\frac{1}{2}}\Big)_0 \notag \\
			-\frac{6}{h} (\bB\overline{\bu}^h_i)_0 + \veps\sigma_{\mathrm{a}}(\widetilde{\bu}^h_i)_0 + \left(\frac{\sigma_{\mathrm{t}}}{\veps}-\veps\sigma_{\mathrm{a}}\right) \bR(\widetilde{\bu}^h_i)_0 \Bigg\}
			= \sum_{i=1}^M\veps \big(\widetilde{\bbf}_{i}\big)_0,
		\end{align}
	\end{subequations}
	where $\avg{\bu^h}_{i+\frac{1}{2}}$ and $[\![\bu^h]\!]_{i\pm\frac{1}{2}}$ are computed through \eqref{eq:fromm_1d}.
\end{prob}

In the proposed hybrid DG/FV method, solutions in each cell are still represented by at least linear polynomials for the $\ell=0$ moment and constants for higher $\ell>0$ moments. However, the values at each element interface for computing the upwind numerical fluxes of higher moments are determined by Fromm's method.  
Therefore, it can be considered as a further generalization of the DG method with heterogeneous polynomial spaces (\Cref{prob:SH_DG}) and be recast as follows.
\begin{prob}\label{prob:SH_DG^hyb}
	Find $\bu^{\veps,h}\in \bV^{h,k_0}_0\times \bV^{h,0}_1\times \cdots \times \bV^{h,0}_N$ such that for all $\bv^h\in \bV^{h,k_0}_0\times \bV^{h,0}_1\times \cdots \times \bV^{h,0}_N$, $k_0\ge 1$,
	\begin{equation}\label{eq:rte_dg}
		\sum_{K\in\mathcal{T}^h}\bigg\{ \left(\bn^K\cdot\overrightarrow{\bA\bu^{\veps,h}}, \bv^{h-}\right)_{\partial K} - \sum_{i=1}^3(\bA^{i}\bu^{\veps,h},\partial_i \bv^h)_K
        + \left(
        \bQ\bu^{\veps,h},\bv^h\right)_K - \veps (\bbf,\bv^h)_K\bigg\}=0,
	\end{equation}
 where the numerical trace $\bn^K\cdot\overrightarrow{\bA\bu^{\veps,h}}$ is defined by the formula \eqref{eq:flux_Pn}, with the boundary values of each element (used to evaluate jumps and averages) $\bu^{\veps,h}|_{\partial K}$ computed by Fromm's method. 
\end{prob}

\section{Numerical experiments}\label{sec:num_results}

In this section, we present some numerical tests to verify the theoretical results obtained in the previous section. To accommodate the most general settings, we consider the time-dependent RTE \eqref{eq:rte_scale_time_all} but only restrict ourselves to the spatial discretization errors for uniform convergence. 
The derivation of the \(P_N\) equations in the reduced geometry is described in \Cref{sec:appendix_PN}. 
The hybrid method introduced in \Cref{sec:err_ana_hybrid} consists of DG methods with $\mathbb{Q}_1$ element for the first moment and second-order FV methods for all other moments. 
The second-order Fromm's method is used as an example in this paper. The standard upwind flux is taken and the implicit BDF2 methods are used for the temporal discretization. Since BDF2 is a two-step method, we use the implicit backward Euler method to compute the solution at one time step. The time step $\Delta t$ is taken as \(\Delta t = 0.25 h\). 
To provide a comparison, we run the numerical examples using the hybrid method, the traditional DG method with $\mathbb{Q}_1$ element, and the second-order FV Fromm's method. 
To illustrate the low-memory property of the hybrid method, the numbers of unknowns of one cell assuming the \(P_N\) model are given in \Cref{tab:no_unkonwns} for each method in different dimensions.
\begin{table}[!htbp]
	\caption{Numbers of unknowns for different methods in different dimensions for \(P_N\) models}\label{tab:no_unkonwns}
	\begin{center}
		\begin{tabular}{c | c | c | c }
			\toprule
			 & DG with $\mathbb{Q}_1$ element & second-order FV & second-order hybrid  \\
			\midrule
			$1$D & $2(N+1)$ & \(N+1\) & \(N+2\) \\
                Ex.~in \S\ref{sect:5.1} & 4 & 2 & 3 \\        
                Ex.~in \S\ref{sect:5.2} & 8 & 4 & 5 \\
                \midrule
			$2$D & \(2(N+1)(N+2)\) & \((N+1)(N+2)/2\) & \((N+1)(N+2)/2+3\) \\
                Ex.~in \S\ref{sect:5.3} & \(40\) & \(10\) & \(13\) \\
                \midrule
			$3$D & \(8(N+1)^2\) & \((N+1)^2\) & \((N+1)^2+7\) \\
			\bottomrule
		\end{tabular}
	\end{center}
\end{table}

\subsection{One-dimensional \texorpdfstring{$P_1$}{P1} model}\label{sect:5.1}
When only the first two moments are considered, the spherical harmonic model becomes the ${P}_1$ model, taking the form 
\begin{subequations}\label{eq:p1-1d}
\begin{align}
\veps \rho_t+m_x&=0, \\
\veps m_t+\frac13 \rho_x&=-\frac{1}{\veps} m.    
\end{align}
\end{subequations}
where $\rho = u_0^0$ and $m = u_1^0$. The initial condition is given by 
\begin{equation}
	\rho(x,0)=\exp(-100(x-0.5)^2) , \qquad m(x,0)=0,
\end{equation}
on the domain $(0,\,1)$, and the boundary condition is periodic. 
Since the exact solution is not known explicitly for this case, we use the traditional DG method with $\mathbb{Q}_1$ element and $N=3200$ cells to compute a reference solution and treat this reference solution as the exact solution in computing the numerical errors.

In Figure \ref{fig.p1}, we plot the $L^2$ errors of the first variable $\rho$ for all three numerical methods (hybrid, DG, and FV methods) at time $t=0.05$ with $\veps=1$, $10^{-3}$, and $10^{-6}$, respectively. The number of cells $N$ is taken from $25$ to $800$. When $\veps=1$, all three numerical methods demonstrate a second-order convergence rate. 
When $\veps=10^{-6}$, the hybrid and DG methods have similar sizes of errors, while the FV method has a large error that does not decrease until the mesh size $h$ is small enough.

\begin{figure}
	\setcounter{subfigure}{0}
	\centering
	\subfloat[$\veps = 1$]{\includegraphics[width=0.33\textwidth,height=0.3\textwidth]{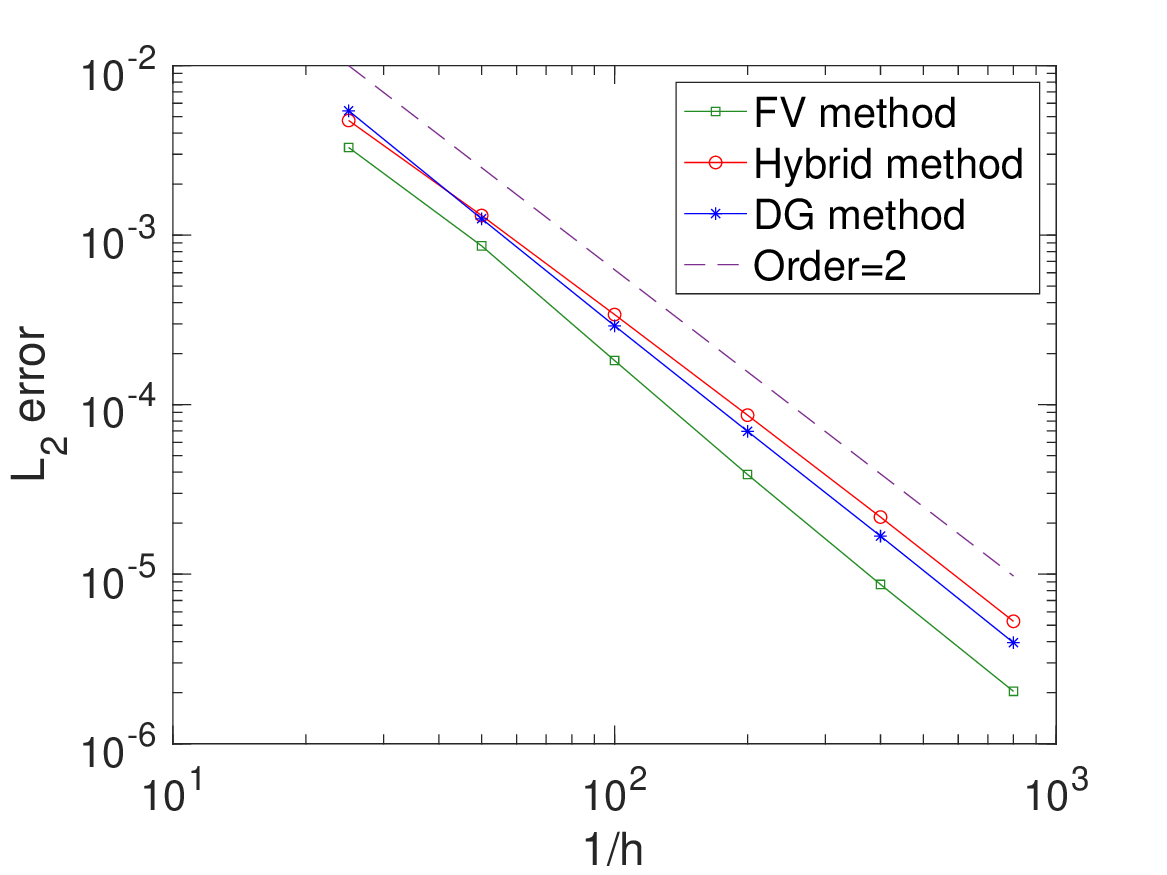}}
	\subfloat[$\veps = 10^{-3}$]{\includegraphics[width=0.33\textwidth,height=0.3\textwidth]{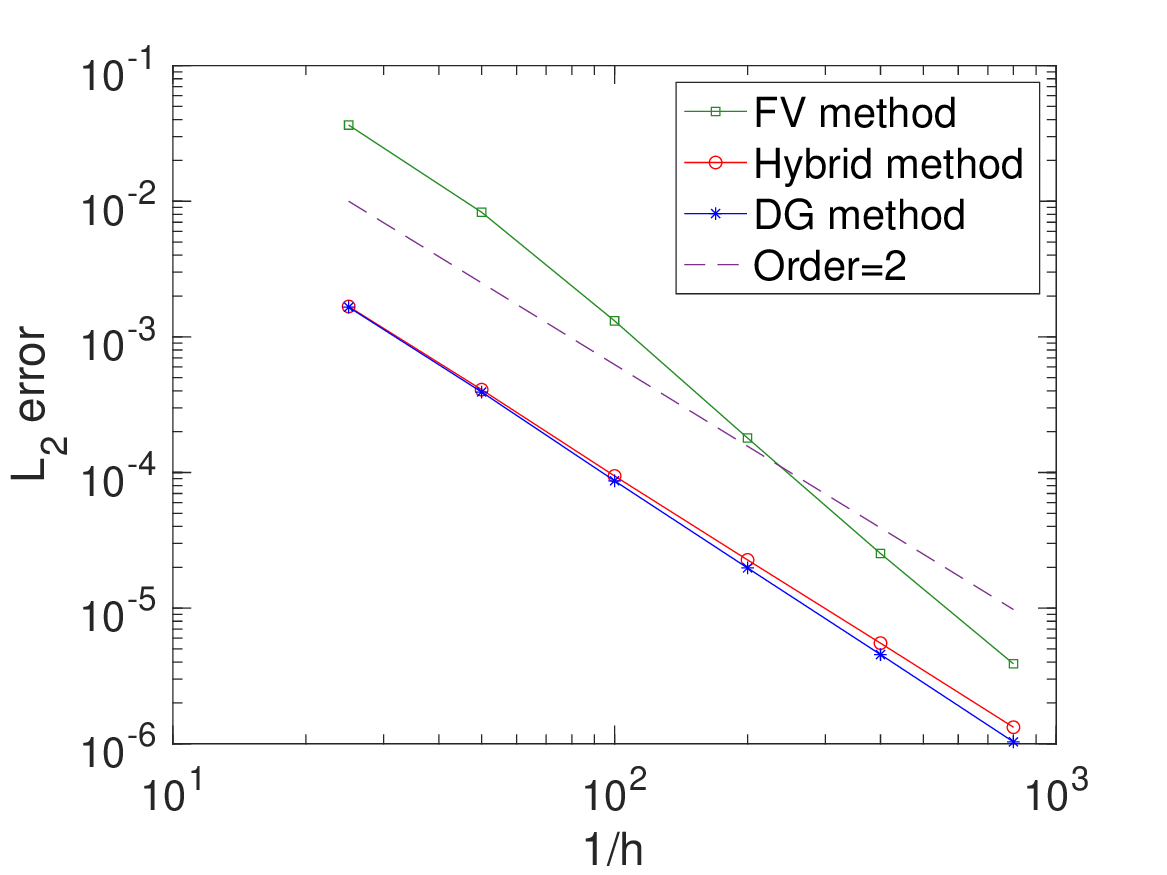}}
	\subfloat[$\veps = 10^{-6}$]{\includegraphics[width=0.33\textwidth,height=0.3\textwidth]{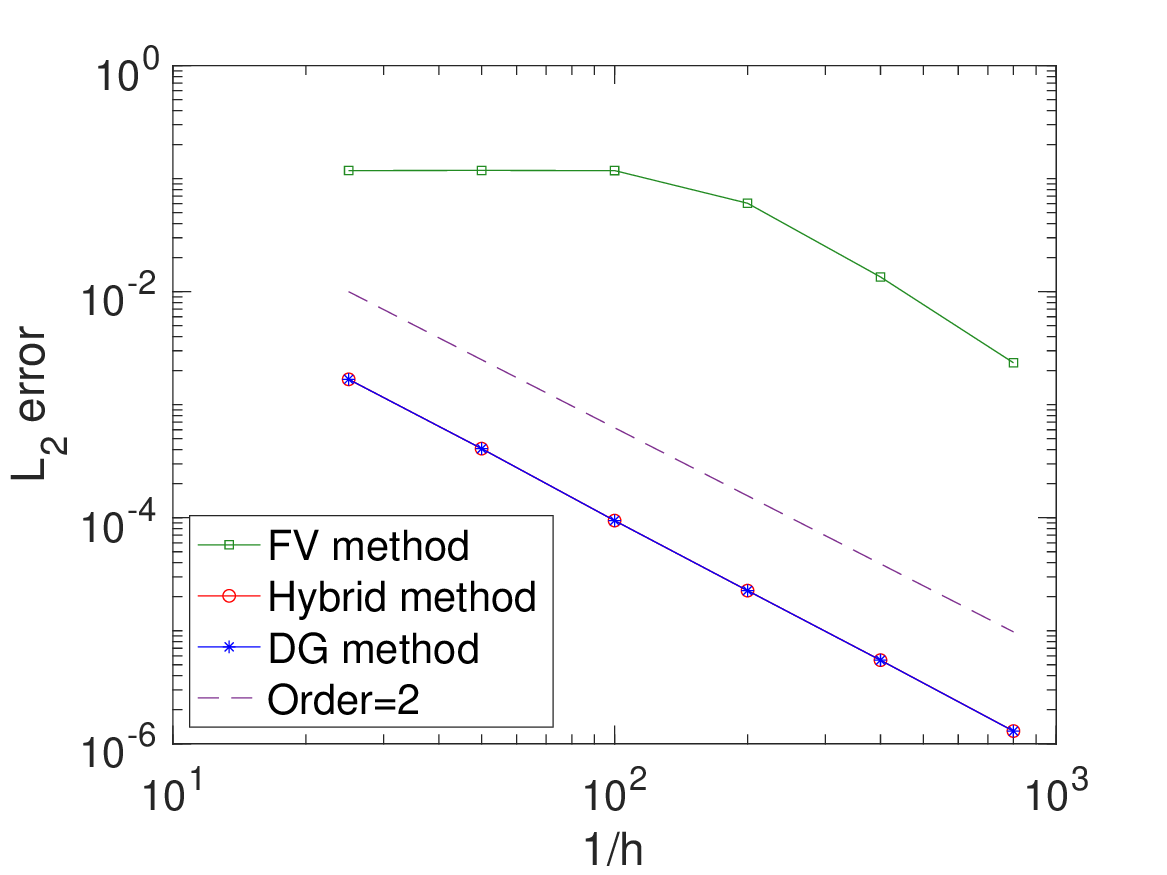}}
	\vspace{-0.15in} \caption{$L^2$ errors of $\rho^h$ for the one dimensional ${P}_1$ model at $t = 0.05$. }\label{fig.p1}
\end{figure}

\subsection{One-dimensional \texorpdfstring{$P_3$}{P3} model}\label{sect:5.2}
When only the first four moments are considered, the spherical harmonic model becomes the ${P}_3$ model, taking the form
\begin{subequations}
\label{eq:p3-1d}
\begin{align}
\veps \rho_t+m_x &=0, \\
\veps m_t+\frac13\rho_x+\frac23 w_x&=-\frac{1}{\veps} m, \\
\veps w_t+\frac25m_x+\frac35 q_x&=-\frac{1}{\veps} w, \\
\veps q_t+\frac37w_x&=-\frac{1}{\veps } q,    
\end{align}
\end{subequations}
where $\rho=u_0^0$, $m=u_1^0$, $w=u_2^0$, and $q=u_3^0$.
The initial condition is given by 
\begin{equation}
\rho(x,0)=\exp(-100(x-0.5)^2) , \qquad m(x,0)=0 , \qquad w(x,0)=0 , \qquad q(x,0)=0,
\end{equation}
on the domain $(0,\,1)$, 
 and the boundary condition is periodic. 
Since the exact solution is not known explicitly for this case, we use the traditional DG method with $\mathbb{Q}_1$ element and $N=3200$ cells to compute a reference solution and treat this reference solution as the exact solution in computing the numerical errors.

In Figure \ref{fig.p3}, we plot the $L^2$ errors of the first variable $\rho$ for all three numerical methods (hybrid, DG, and FV methods) at time $t=0.05$ with $\veps=1$, $10^{-3}$, and $10^{-6}$, respectively. The number of cells $N$ is taken from $25$ to $800$. When $\veps=1$, all three numerical methods demonstrate a second-order convergence rate. When $\veps=10^{-6}$, the hybrid and DG methods have similar sizes of errors, while the FV method has a large error that does not decrease until the mesh size $h$ is very small.  In \cite{lowrie2002methods}, it was shown that the formal consistency error of second-order FV methods typically scales like $\mathcal{O}(h^2 + \veps^{-1} h^3)$.  The behavior of the FV error in Figures \ref{fig.p1} and \ref{fig.p3} are consistent with this scaling.

\begin{figure}
	\setcounter{subfigure}{0}
	\centering
	\subfloat[$\veps = 1$]{\includegraphics[width=0.33\textwidth,height=0.3\textwidth]{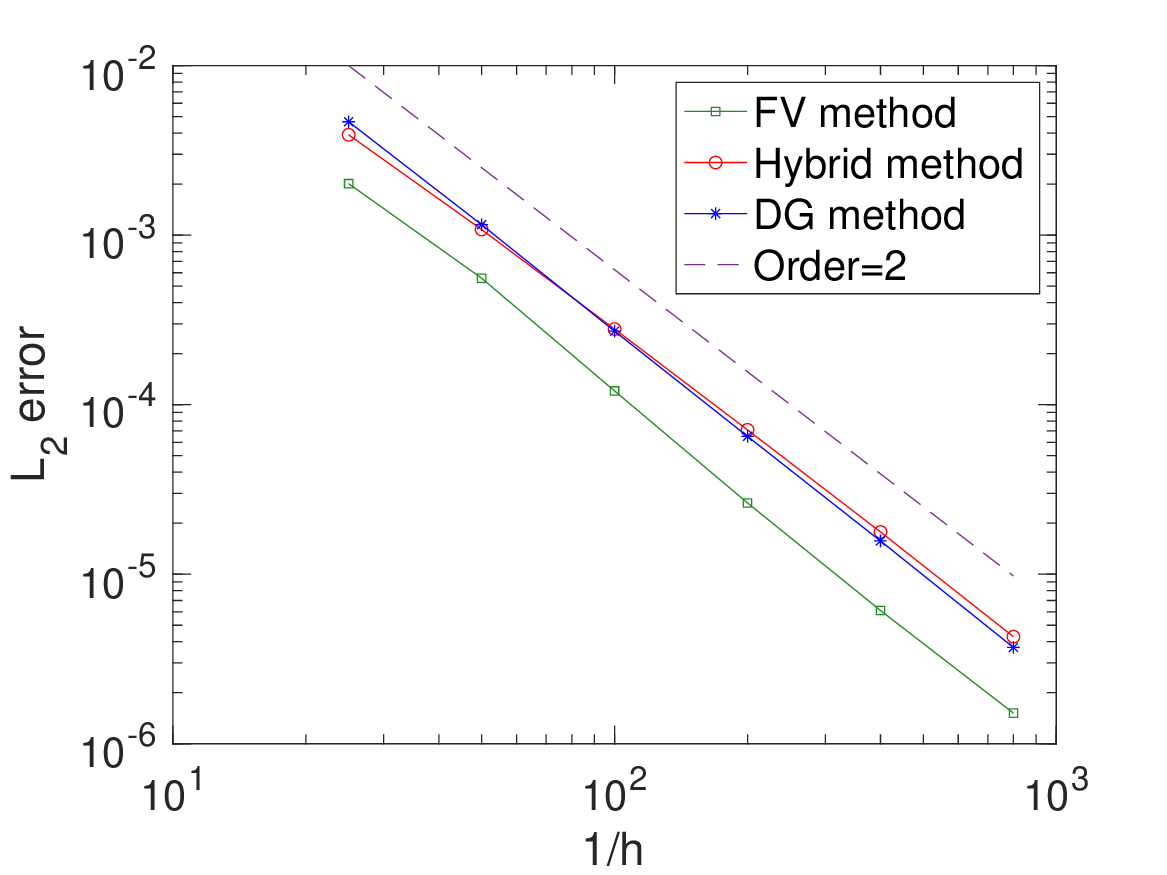}}
	\subfloat[$\veps = 10^{-3}$]{\includegraphics[width=0.33\textwidth,height=0.3\textwidth]{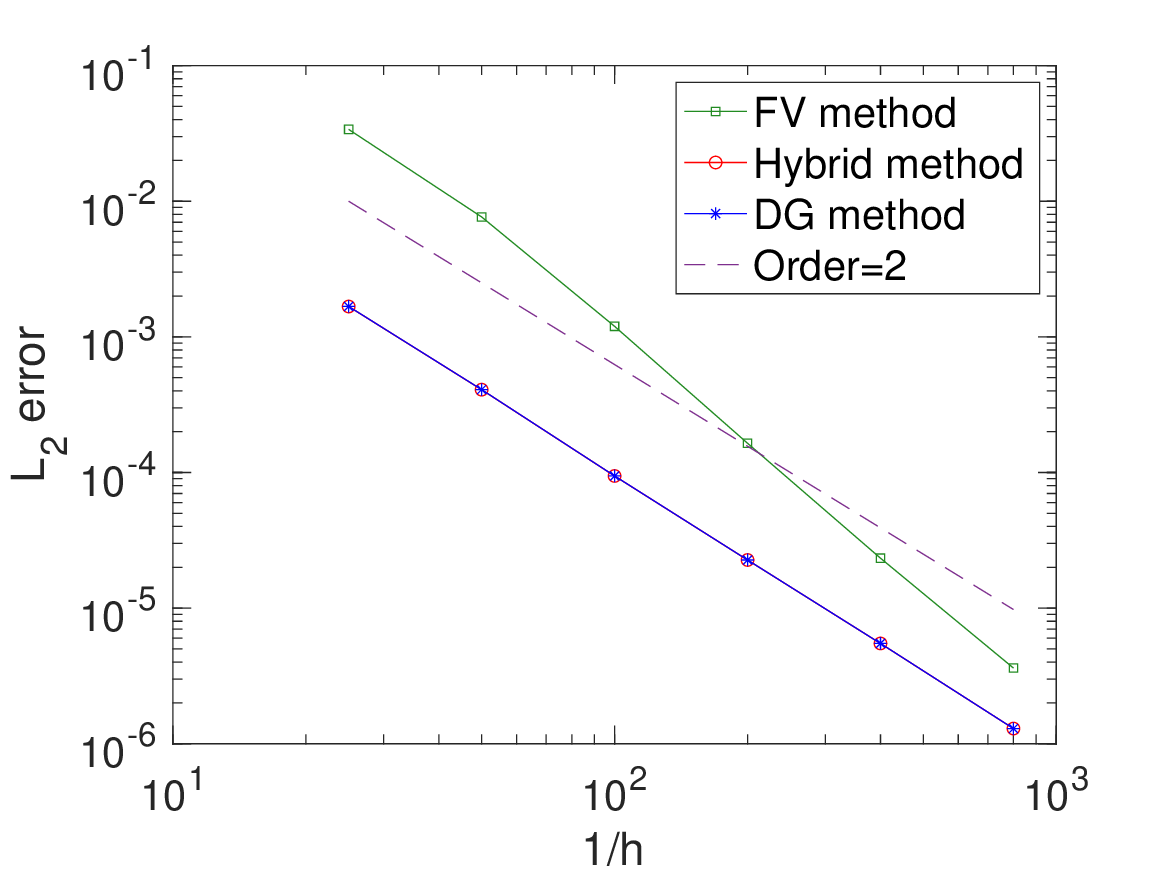}}
	\subfloat[$\veps = 10^{-6}$]{\includegraphics[width=0.33\textwidth,height=0.3\textwidth]{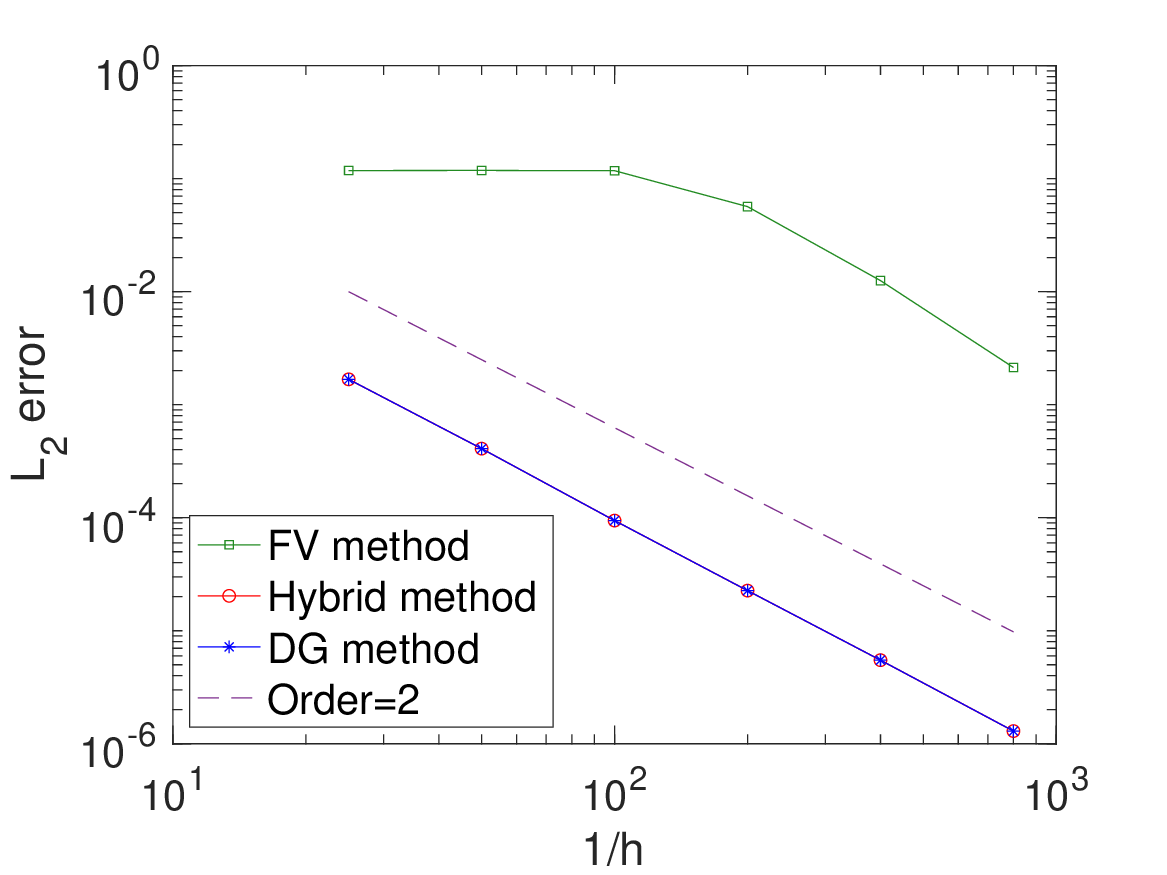}}
	\vspace{-0.15in} \caption{$L^2$ errors of $\rho^h$ for the one dimensional ${P}_3$ model at $t = 0.05$.} \label{fig.p3}
\end{figure}

\subsection{Two-dimensional \texorpdfstring{$P_3$}{P3} model}\label{sect:5.3}
In this subsection, we consider the two-dimensional ${P}_3$ model on the unit square domain $(0,\,1)\times(0,\,1)$.
The initial condition of the zeroth moment $\rho=u^0_0$ is given by  
\begin{equation}
	\rho(x,y,0)=1+\sin(2\pi x)\sin(2\pi y). 
\end{equation}
All higher-order moments are initially $zero$.
The periodic boundary condition is used for simplicity. 
Since the exact solution is not known explicitly for this case, we use the traditional DG method with $\mathbb{Q}_1$ element and $80\times 80$ cells to compute a reference solution for evaluating the numerical errors.

In Figure \ref{fig.2dp3}, we plot the $L^2$ errors of the first variable $\rho$ for all three numerical (hybrid, DG, and FV methods) at time $t=0.05$ with $\veps=1$, $10^{-3}$, and $10^{-6}$, respectively. The number of cells $N$ is taken from $5\times 5$ to $40\times 40$. When $\veps=1$, all three numerical methods demonstrate a second-order convergence rate. 
When $\veps=10^{-6}$, the hybrid and DG methods have similar sizes of error, while the FV method has a large error that does not decrease until the mesh size $h$ is small enough.

\begin{figure}
	\setcounter{subfigure}{0}
	\centering
	\subfloat[$\veps = 1$]{\includegraphics[width=0.33\textwidth,height=0.3\textwidth]{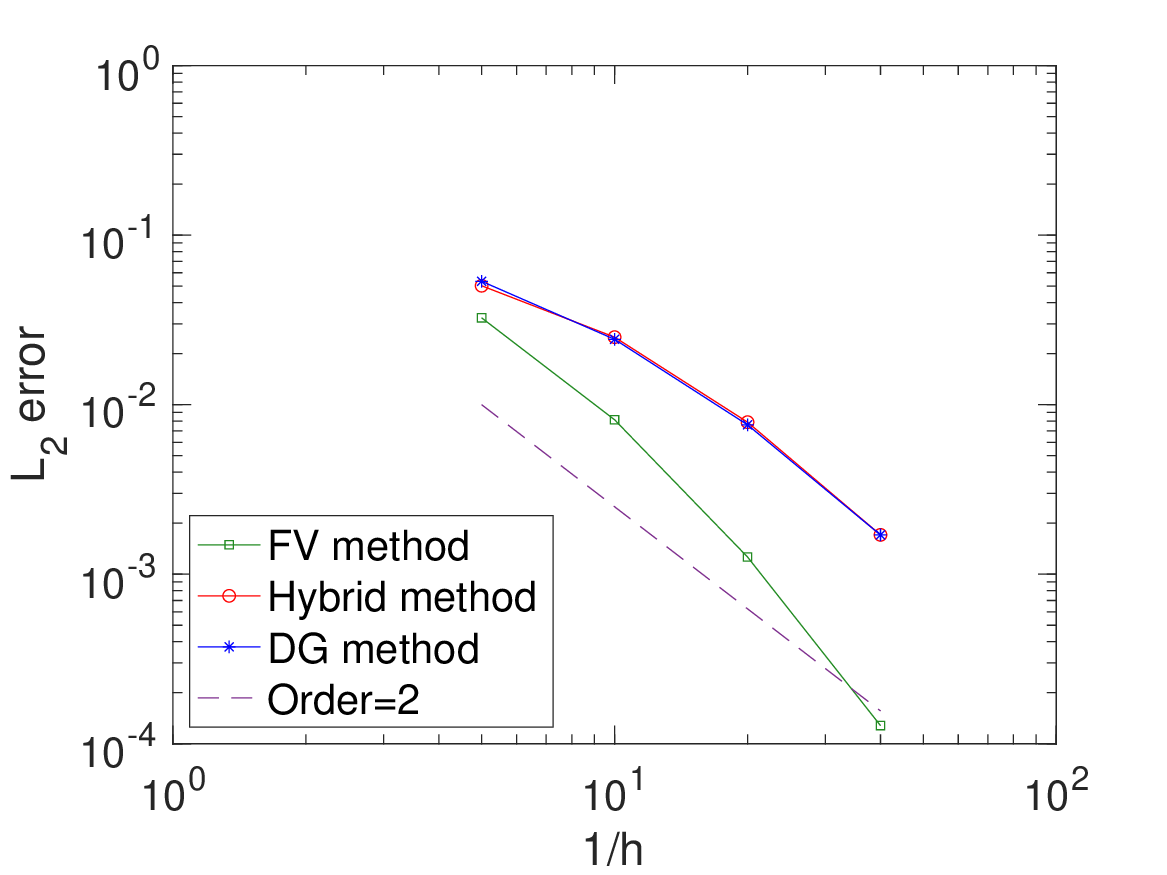}}
	\subfloat[$\veps = 10^{-3}$]{\includegraphics[width=0.33\textwidth,height=0.3\textwidth]{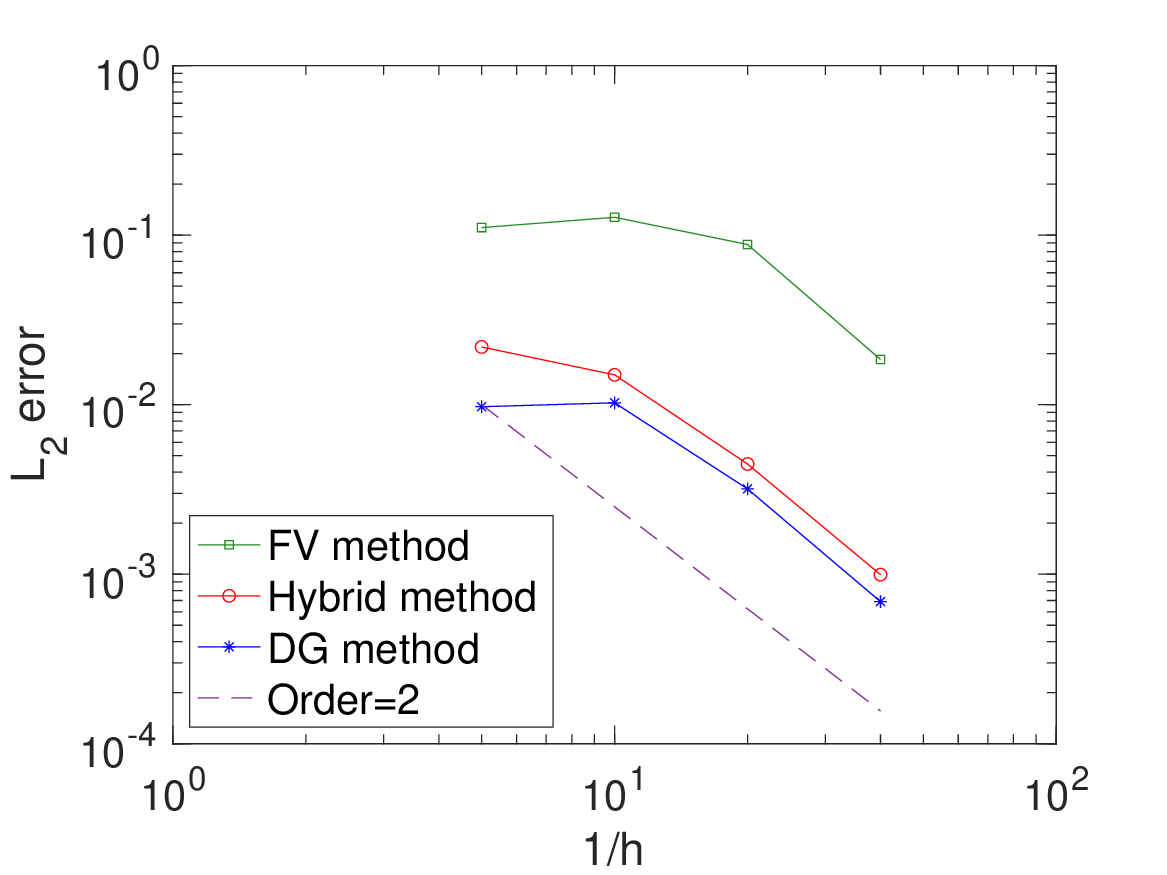}}
	\subfloat[$\veps = 10^{-6}$]{\includegraphics[width=0.33\textwidth,height=0.3\textwidth]{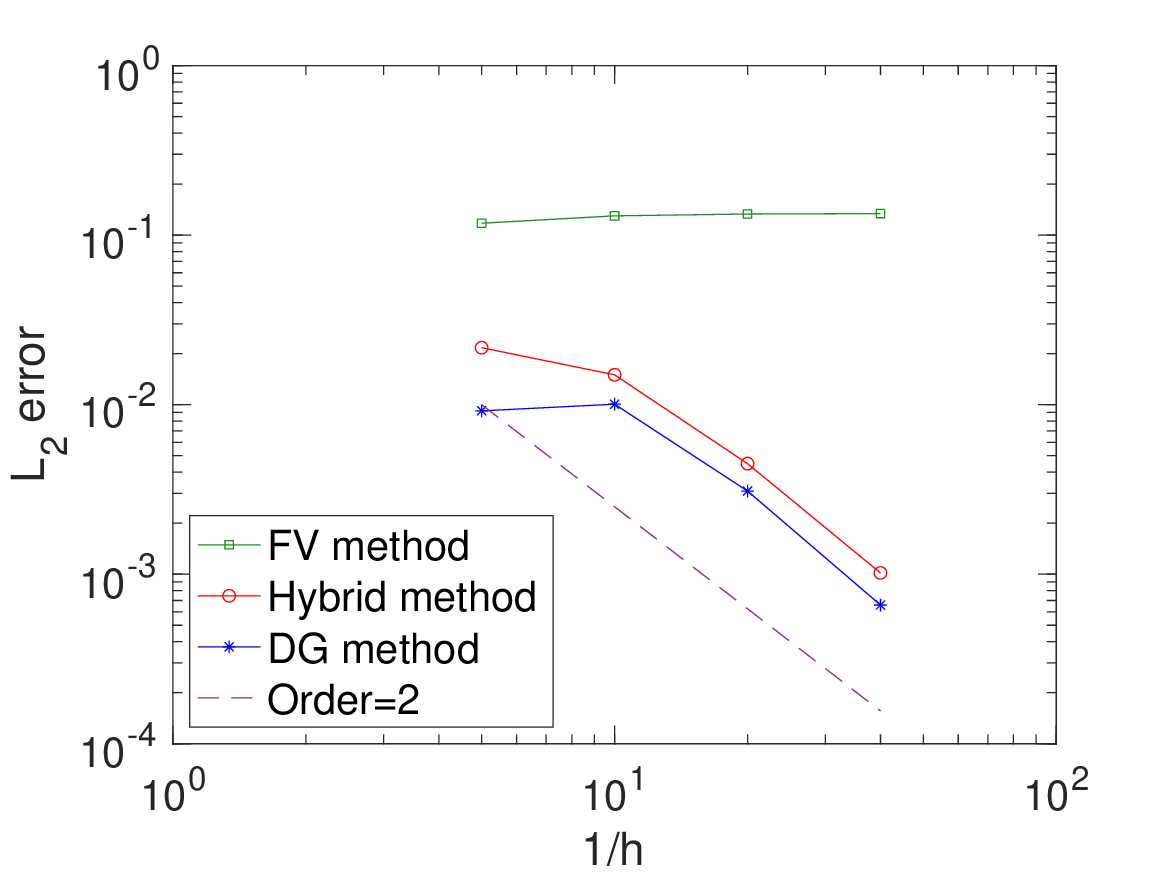}}
	\vspace{-0.15in} \caption{ $L^2$ errors of $\rho^h$ for the two dimensional $P_3$ model at $t = 0.05$.} \label{fig.2dp3}
\end{figure}

In Figure \ref{fig.2dp3.2}, we plot the numerical solutions of $\rho^h$ for all three methods at the final time $t=0.05$ with $\veps=10^{-6}$. 
For such a small $\veps$, the $P_3$ model is very close to the limit diffusion problem. As a comparison, we also solve the limit diffusion equation by a local DG method and plot its solution in Figure \ref{fig.2dp3.2}. We can observe that the hybrid and DG methods provide numerical solutions which are very similar to that of the limit diffusion equation, while the FV (non-AP) solution is almost flat because of the large amount of numerical diffusion.

\begin{figure}
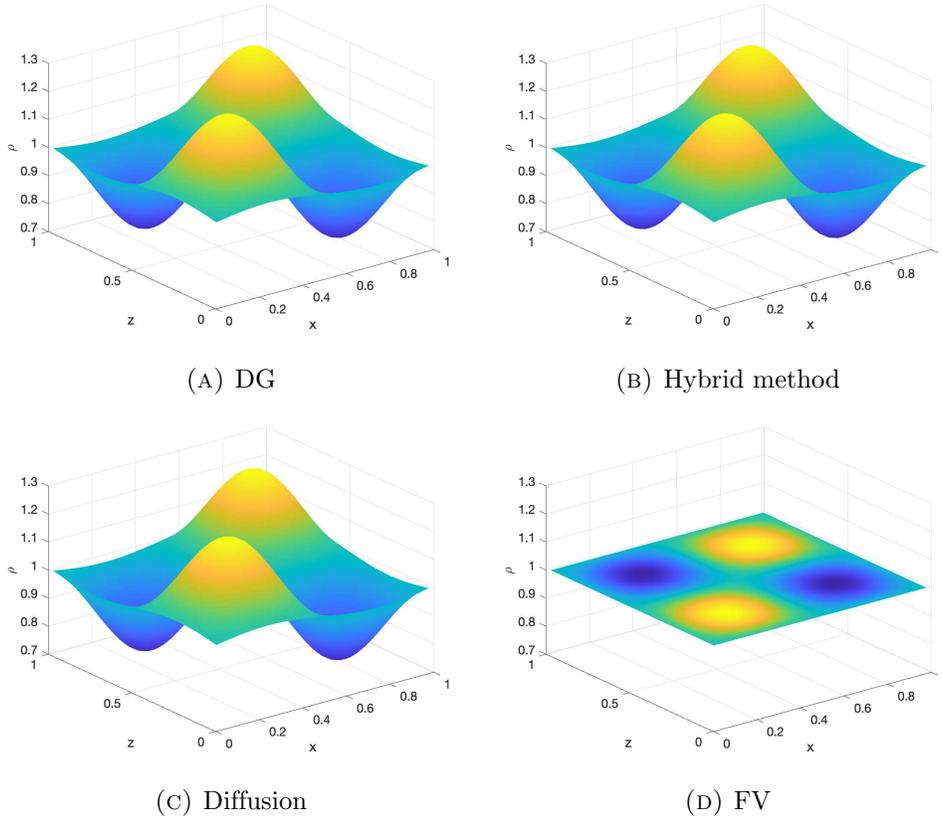

	\setcounter{subfigure}{0}
	\centering
	\subfloat[DG]{\includegraphics[width=0.4\textwidth]{./fig/2DP3_DG}}
	\subfloat[Hybrid method]{\includegraphics[width=0.4\textwidth]{./fig/2DP3_mixed}} \\
	\subfloat[Diffusion]{\includegraphics[width=0.4\textwidth]{./fig/2DP3_heat}}
	\subfloat[FV]{\includegraphics[width=0.4\textwidth]{./fig/2DP3_FV}}
	\vspace{-0.15in} \caption{Numerical solution $\rho^h$ for the two dimensional $P_3$ model at $t = 0.05$.} \label{fig.2dp3.2}
\end{figure}

\section{Concluding remarks}\label{sec:conclusion}
In this paper, we have performed an asymptotic analysis on a spherical harmonics DG discretization for the radiative transport equation to show that uniform convergence with respect to the scaling parameter $\veps$ can be achieved using piece-wise linear (or higher order) elements only for the degree zero moment while using constant elements to approximate higher moments. Based on this observation, we propose a new DG method with heterogeneous polynomial spaces for different moments.  This results in a method that still captures the diffusion limit when $\veps \to 0$, but uses fewer degrees of freedom in physical space.  However, the accuracy of the method is reduced when $\veps \sim 1$.

To overcome the reduction of uniform accuracy in the heterogeneous DG method, we propose a hybrid discontinuous Galerkin/finite volume scheme in which FV reconstructions are used for the higher moments. 
The new method is asymptotic preserving, and it maintains the convergence rate of the conventional DG method, while still using fewer degrees of freedom.  Some numerical experiments, including the comparison with other techniques, are carried out to test the performance of the scheme and show the effectiveness of the method.


\section*{Acknowledgments}
The work of C. Hauck is partially supported by NSF Grant DMS-1913277 and by the Department of Energy Office of Advanced Scientific Computing Research and the National Science Foundation (1217170); 
ORNL is operated by UT-Battelle, LLC., for the U.S. Department of Energy under Contract DE-AC05-00OR22725. The United States Government retains and the publisher, by accepting the article for publication, acknowledges that the United States Government retains a non-exclusive, paid-up, irrevocable, world-wide license to publish or reproduce the published form of this manuscript, or allow others to do so, for the United States Government purposes. The Department of Energy will provide public access to these results of federally sponsored research in accordance with the DOE Public Access Plan (http://energy.gov/downloads/doe-public-access-plan);
The work of Y. Xing is partially supported by the NSF grant DMS-1753581 and DMS-2309590.


\appendix\label{sec:appendix}
\section{\texorpdfstring{\(P_N\)}{PN} equations for reduced geometries model}\label{sec:appendix_PN}
In this section, we connect the general $P_N$ equations \eqref{eq:rte_scale_Pn} to the reduced geometry equations in \Cref{sec:num_results}.  We focus on the steady-state equations, but the time-dependent versions simulated in \Cref{sec:num_results} can be derived in a similar fashion.

Recall the components of $\bomega$ in \eqref{eq:omega_components} expressed in terms of the polar angle $\theta$ and azimuthal angle $\varphi$.  The normalized, complex-valued spherical harmonics are
\begin{equation}
    Y_{\ell}^{\kappa}(\bomega) 
    = \sqrt{\frac{2\ell+1}{4 \pi}}\frac{(\ell-\kappa)!}{(\ell+\kappa)!} P_\ell^\kappa (\mu) e^{i \kappa \varphi},
\end{equation}
where $\mu = \cos \theta$ and the associated Legendre functions are given by
\begin{equation}
    P_\ell^\kappa(\mu) =\begin{cases}
         (-1)^\kappa (1-\mu^2)^{m/2} \frac{d^{(\kappa)}}{d \mu^{(\kappa)}} P_\ell(\mu),
        & \kappa \geq 0,\\
         (-1)^{|\kappa|} \frac{(\ell+\kappa)!}{(\ell-\kappa)!}  P_\ell^{|\kappa|},
        & \kappa < 0,
    \end{cases}
\end{equation}
and $P_\ell \colon [-1,1] \to \mathbb{R}$ is the degree $\ell$ Legendre polynomial, normalized so that $\int_{-1}^1 P_\ell P_{\ell'} d \mu = \frac{2}{2\ell+1} \delta_{\ell,\ell'}$.  The normalized, real-valued spherical harmonics are
\begin{equation}
    m_{\ell}^{\kappa}(\bomega) 
    =\begin{cases}
    \sqrt{2} (-1)^\kappa \operatorname{Im}[Y_\ell^{|\kappa|}] ,
    & \kappa < 0, \\
    Y_\ell^{0},
    & \kappa = 0, \\
    \sqrt{2} (-1)^\kappa \operatorname{Re}[Y_\ell^{|\kappa|}],
    & \kappa > 0.
\end{cases}
\end{equation}

\subsection{One-dimensional slab geometry}\label{sec:appendix_PN_1d}

Recall the slab geometry RTE in \eqref{eq:1d}.  The equation assumes that $u$ and $f$ are independent of $x$, $y$ and $\varphi$.  If $u_{\mathrm{PN}}$ is also assumed to be independent of $x$, $y$ and $\varphi$, then for any $\kappa \ne 0$,
\begin{equation}
    u_\ell^\kappa  
    = \int_{-1}^1 u_{\mathrm{PN}}(z,\mu)
    \left(\int_0^{2 \pi} m_\ell^\kappa(\bomega)  d \varphi \right) d \mu
    = 0.
\end{equation}
Thus the only non-zero moments of $u$ are with $\kappa = 0$, i.e., with respect to $Y_{\ell}^{0}(\bomega) = \sqrt{\frac{2\ell+1}{4 \pi}} P_\ell (\mu)$.
A common convention \cite[Appendix D]{LM1984} for slab geometry is to express  the $P_N$ equations in terms of 
\begin{equation}
    v_\ell = \frac{1}{2}\int_{-1}^1 u_{\mathrm{PN}} P_\ell(\mu)\ud \mu
    = \frac{1}{\sqrt{4 \pi}} \frac{1}{\sqrt{2 \ell+1}} u_\ell^0,
\end{equation}
so that 
\begin{equation}
    u_{\mathrm{PN}} = \sum_{\ell = 0}^N (2\ell+1) v_\ell P_\ell.
\end{equation}
Setting this expansion into \eqref{eq:1d} and integrating the result against $\frac{1}{2} P_\ell$ over $\mu \in [-1,1]$ give the following equation for $\bv = \begin{bmatrix} v_0 & \cdots & v_N \end{bmatrix}^\tran$:
\begin{equation}\label{eq:PN_1D}
     \bB \partial_z \bv 
    + \veps\sigma_{\mathrm{a}}\bv 
    + \left(\frac{\sigma_{\mathrm{t}}}{\veps}-\veps\sigma_{\mathrm{a}}\right) {\bR}\bv 
    = \veps {\bbf},
\end{equation}
where $f_\ell = \frac{1}{2}\int_{-1}^1 f P_\ell(\mu)\ud \mu$, $R_\ell = \delta_{\ell,0}$, and for $0 \leq \ell,\ell' \leq N$, the matrix $\bB$ has elements
\begin{equation}
    \bB_{\ell,\ell'} 
    =  \frac{2 \ell' +1}{2} \int_{-1}^1 P_\ell(\mu) P_{\ell'}(\mu) d \mu
    = \frac{\ell+1}{2 \ell +1} \delta_{\ell+1,\ell'} +
        \frac{\ell}{2 \ell +1} \delta_{\ell-1,\ell'}.
\end{equation}
The matrix $\bB$ is the flux matrix that appears in \eqref{eq:p1-1d} when $N=1$ and in \eqref{eq:p3-1d} when $N=3$. 
\subsection{2-D plane-parallel model}
We summarize here the approach used in \cite{brunner2005two} to derive the \(P_N\) equation for the 2-D plane-parallel model.
Let us assume that $\sigma_{\mathrm{t}}$, $\sigma_{\mathrm{a}}$, and $f$ do not depend on $y$, are the functions only of $x$, $z$, $\theta$, and $\varphi$, and are even in $\varphi - \pi$. Furthermore, the solution \(u\) is independent of $y$. Then obviously, the solution is a function only of $x$, $z$, $\theta$, and $\varphi$. Set $\mu=\cos\theta$, and the time-dependent scaled radiative transfer equation can be written in the following form
\begin{equation}\label{eq:rte_2D_y}
 \sqrt{1-\mu^2}\cos\varphi\frac{\partial u}{\partial x} + \mu\frac{\partial u}{\partial z} + \frac{\sigma_{\mathrm{t}}}{\veps}u
		=\left(\frac{\sigma_{\mathrm{t}}}{\veps}-\veps\sigma_{\mathrm{a}}\right)\Vint{u}+\veps f,
\end{equation}
where $\mu=\cos\theta\in[0,1]$, $\varphi\in [0,2\pi)$, $u=u(x,z,\mu,\varphi)$, and $f=f(x,z,\mu,\varphi)$. 
It is easy to show that if \(u(x,z,\varphi,\mu)\) is a solution of \eqref{eq:rte_2D_y}, then so too is  \(u(x,z,2 \pi -\varphi,\mu)\).
Hence \(u\) is also an even function of \(\varphi-\cpi\). 

In lieu of real-valued spherical harmonics, we employ the complex-valued normalized spherical harmonics \(Y_\ell^\kappa\) of degree \(\ell\) and order \(\kappa\) for \(-\ell\le\kappa\le\ell\) and \(0\le\ell\le N\). 
We have the following recursive relations
\begin{subequations}\label{eq:recur_comp}
    \begin{align}
	\mu Y_\ell^k&=A_\ell^k Y_{\ell+1}^k + B_\ell^k Y_{\ell-1}^k, \\
	\sqrt{1-\mu^2}\cos\varphi Y_\ell^k &= \frac{1}{2}\left(-C_\ell^k Y_{\ell+1}^{k+1} + D_\ell^k Y_{\ell-1}^{k+1} + E_\ell^k Y_{\ell+1}^{k-1} -  F_\ell^k Y_{\ell-1}^{k-1}\right),
\end{align}
\end{subequations}
with
\begin{alignat}{2}
	A_\ell^k&=\sqrt{\frac{(\ell-k+1)(\ell+k+1)}{(2\ell+3)(2\ell+1)}}, &\qquad B_\ell^k&=\sqrt{\frac{(\ell-k)(\ell+k)}{(2\ell+1)(2\ell-1)}}, \\
	C_\ell^k&=\sqrt{\frac{(\ell+k+1)(\ell+k+2)}{(2\ell+3)(2\ell+1)}}, & D_\ell^k&=\sqrt{\frac{(\ell-k)(\ell-k-1)}{(2\ell+1)(2\ell-1)}}, \\
	E_\ell^k&=\sqrt{\frac{(\ell-k+1)(\ell-k+2)}{(2\ell+3)(2\ell+1)}}, & F_\ell^k&=\sqrt{\frac{(\ell+k)(\ell+k-1)}{(2\ell+1)(2\ell-1)}}. 
\end{alignat}

Similar to the procedure of obtaining the one-dimensional \(P_N\) equation \eqref{eq:PN_1D}, we multiply \eqref{eq:rte_2D_y} by the complex-valued normalized spherical harmonics $Y_\ell^\kappa$ and employ the recursive relations \eqref{eq:recur_comp} to obtain
\begin{multline}\label{eq:rte_scale_Pn_2D_y_scalar}
    \frac{1}{2}\partial_x\left(-C_{\ell-1}^{\kappa-1}u_{\ell-1}^{\kappa-1} + D_{\ell+1}^{\kappa-1}u_{\ell+1}^{\kappa-1} + E_{\ell-1}^{\kappa+1}u_{\ell-1}^{\kappa+1} - F_{\ell+1}^{\kappa+1}u_{\ell+1}^{\kappa+1}\right) \\ 
    +\partial_z\left(A_{\ell-1}^{\kappa}u_{\ell-1}^{\kappa} + B_{\ell+1}^{\kappa}u_{\ell+1}^{\kappa}\right) 
    + \veps\sigma_{\mathrm{a}}u_{\ell}^{\kappa}
    + \left(\frac{\sigma_{\mathrm{t}}}{\veps}-\veps\sigma_{\mathrm{a}}\right)(1-\delta_{\ell0}\delta_{\kappa0})u_{\ell}^{\kappa}=\veps f_{\ell}^{\kappa},
\end{multline}
for \(0\le \ell\le N\) and \(-\ell\le \kappa\le\ell\) with \(f_{\ell}^{\kappa}=4\cpi\Vint{fY_\ell^\kappa}\). 
Note that the real and imaginary parts of the above equation are decoupled. Furthermore, since \(f\) is an even function of \(\varphi-\cpi\), we have \(\Vint{f P_\ell^k(\mu)\sin(k(\varphi-\cpi))}=0\). Therefore, \(f_{\ell}^{\kappa}\) is real, and we only need to solve for the real parts of the equation \eqref{eq:rte_scale_Pn_2D_y_scalar}.

We denote \(\bu\) as a vector of spherical harmonics moments of \(u\) grouped in blocks of constant \(\kappa\)
\begin{equation}
    \bu=\begin{bmatrix}
		\left(\bu^{-N}\right)^\tran & \left(\bu^{-N+1}\right)^\tran & \cdots & \left(\bu^{-1}\right)^\tran & \left(\bu^{0}\right)^\tran & \left(\bu^{1}\right)^\tran & \cdots & \left(\bu^{N}\right)^\tran 
	\end{bmatrix}^\tran
\end{equation}
with \(\bu^\kappa=\begin{bmatrix}
		u_{|\kappa|}^{\kappa} & u_{|\kappa|+1}^{\kappa} & \cdots & u_{N}^{\kappa}
	\end{bmatrix}^\tran\). 
Then the matrix form of the 2D \(P_N\) equation \eqref{eq:rte_scale_Pn_2D_y_scalar} can be written as 
\begin{equation}\label{eq:rte_scale_Pn_2D_y_full}
 \bA_x\partial_x\bu + \bA_z\partial_z\bu + \veps\sigma_{\mathrm{a}}\bu + \left(\frac{\sigma_{\mathrm{t}}}{\veps}-\veps\sigma_{\mathrm{a}}\right) \bR\bu = \veps \bbf,
\end{equation}
which has the same format as \eqref{eq:rte_scale_Pn} if we set \(\bA_y=0\). 

The number of unknowns in \eqref{eq:rte_scale_Pn_2D_y_full} can be further reduced due to the special structure of matrix \(\bA_x\) and \(\bR\). 
First,  if $\bu$
is solution of \eqref{eq:rte_scale_Pn_2D_y_full}, then, due to the symmetry of the equations, 
 \begin{equation}\bu'=\begin{bmatrix}
		\left(\bu^{N}\right)^\tran & \cdots & \left(\bu^{1}\right)^\tran & \left(\bu^{0}\right)^\tran & \left(\bu^{-1}\right)^\tran & \cdots & \left(\bu^{-N}\right)^\tran 
	\end{bmatrix}^\tran\end{equation} 
is also a solution. Therefore, by the uniqueness
\begin{equation}
    \bu^{\kappa}=\bu^{-\kappa}, \quad \kappa=0,1,\cdots,N.
\end{equation}
In particular, since \(u_\ell^{-1}=u_\ell^1\), \(C_\ell^{-1}=E_\ell^1\), and \(D_\ell^{-1} = F_\ell^1\), we can eliminate the \(u_\ell^{-1}\) terms from \eqref{eq:rte_scale_Pn_2D_y_scalar} for the \(\kappa = 0\) case:
\begin{multline}\label{eq:rte_scale_Pn_2D_y_scalar_m=0}
    \partial_x\left(E_{\ell-1}^{1}u_{\ell-1}^{1} - F_{\ell+1}^{1}u_{\ell+1}^{1}\right) \\ 
    +\partial_z\left(A_{\ell-1}^{0}u_{\ell-1}^{0} + B_{\ell+1}^{0}u_{\ell+1}^{0}\right) 
    + \veps\sigma_{\mathrm{a}}u_{\ell}^{0}
    + \left(\frac{\sigma_{\mathrm{t}}}{\veps}-\veps\sigma_{\mathrm{a}}\right)(1-\delta_{\ell0})u_{\ell}^{0}=\veps f_{\ell}^{0}.
\end{multline}
This equation, together with the equations for \(\kappa>0\)
\begin{multline}\label{eq:rte_scale_Pn_2D_y_scalar_m>0}
    \frac{1}{2}\partial_x\left(-C_{\ell-1}^{\kappa-1}u_{\ell-1}^{\kappa-1} + D_{\ell+1}^{\kappa-1}u_{\ell+1}^{\kappa-1} + E_{\ell-1}^{\kappa+1}u_{\ell-1}^{\kappa+1} - F_{\ell+1}^{\kappa+1}u_{\ell+1}^{\kappa+1}\right) \\ 
    +\partial_z\left(A_{\ell-1}^{\kappa}u_{\ell-1}^{\kappa} + B_{\ell+1}^{\kappa}u_{\ell+1}^{\kappa}\right) 
    + \veps\sigma_{\mathrm{a}}u_{\ell}^{\kappa}
    + \left(\frac{\sigma_{\mathrm{t}}}{\veps}-\veps\sigma_{\mathrm{a}}\right)u_{\ell}^{\kappa}=\veps f_{\ell}^{\kappa},\; \kappa>0
\end{multline}
decouples the unknowns \(\bv =\begin{bmatrix}
		\left(\bu^{0}\right)^\tran & \left(\bu^{1}\right)^\tran & \cdots & \left(\bu^{N}\right)^\tran 
	\end{bmatrix}^\tran\)
from the rest and can be solved independently. \Cref{eq:rte_scale_Pn_2D_y_scalar_m=0,eq:rte_scale_Pn_2D_y_scalar_m>0} can be reformatted as
\begin{equation}\label{eq:rte_scale_Pn_2D_y_reduced}
	\bB^{(1)}\partial_x\bv + \bB^{(3)}\partial_z\bv + \veps\sigma_{\mathrm{a}}\bv + \left(\frac{\sigma_{\mathrm{t}}}{\veps}-\veps\sigma_{\mathrm{a}}\right) \bR\bv = \veps \bbf.
\end{equation}
For the $P_3$ model used in Section \ref{sect:5.3}, $\bv = \begin{bmatrix} u_0^0 & u_1^0 & u_2^0 & u_3^0 & u_1^1 & u_2^1 & u_3^1 & u_2^2 & u_3^2 & u_3^3 \end{bmatrix}^\tran$ is the unknown and the matrices  $\bB^{(1)}$ and $\bB^{(3)}$ are given in the sparse format by \Cref{tab:B_1,tab:B_3}, respectively. Note that \(\bB^{(1)}\) is no longer symmetric because of this trick to decouple the negative \(\kappa\) unknowns. 
We remark that the unknowns for the \(P_N\) equation for the 2-D plane-parallel model are nearly halved by the additional symmetry due to the reduction of one spatial variable. 

\begin{table}[h]
    \centering
    \begin{tabular}{c|c c c c c c}
       \toprule
       \multicolumn{7}{c}{\(\bB^{(1)}\)}\\
       \midrule
       \((i,j)\) & \((1,5)\) & \((2,6)\) & \((3,5)\) & \((3,7)\) & \((4,6)\) & \((5,1)\) \\
       Value & \(-F_1^1\) & \(-F_2^1\) & \(E_1^1\) & \(-F_3^1\) & \(E_2^1\) & \(-C_0^0/2\) \\
       \midrule
       \((i,j)\) & \((5,3)\) & \((5,8)\) & \((6,2)\) & \((6,4)\) & \((6,9)\) & \((7,3)\)\\
       Value & \(D_2^0/2\) & \(-F_2^2/2\) & \(-C_1^0/2\) & \(D_3^0/2\) & \(-F_3^2/2\) & \(-C_2^0/2\) \\
       \midrule
       \((i,j)\)  & \((7,8)\) & \((8,5)\) & \((8,7)\) & \((8,10)\) & \((9,6)\) & \((10,8)\) \\
       Value & \(E_2^2/2\) & \(-C_1^1/2\) & \(D_3^1/2\) & \(-F_3^3/2\) & \(-C_2^1/2\) & \(-C_2^2/2\) \\
       \bottomrule
    \end{tabular}
    \caption{Element values of \(\bB^{(1)}\) in row \(i\) and column \(j\)}\label{tab:B_1}
\end{table}
\begin{table}[h]
    \centering
    \begin{tabular}{c|c c c c c c}
       \toprule
       \multicolumn{7}{c}{\(\bB^{(3)}\)}\\
       \midrule
       \((i,j)\) & \((1,2)\) & \((2,1)\) & \((2,3)\) & \((3,2)\) & \((3,4)\) & \((4,3)\) \\
       Value & \(B_1^0\) & \(A_0^0\) & \(B_2^0\) & \(A_1^0\) & \(B_3^0\) & \(A_2^0\) \\
       \midrule
       \((i,j)\) & \((5,6)\) & \((6,5)\) & \((6,7)\) & \((7,6)\) & \((8,9)\) & \((9,8)\)\\
       Value & \(B_2^1\) & \(A_1^1\) & \(B_3^1\) & \(A_2^1\) & \(B_3^2\) & \(A_2^2\) \\
       \bottomrule
    \end{tabular}
    \caption{Element values of \(\bB^{(3)}\) in row \(i\) and column \(j\)}\label{tab:B_3}
\end{table}


\section{Hybrid discontinuous Galerkin methods} \label{sec:appx}
In this appendix, we derive hybrid spherical harmonics DG methods for the stationary \(2\)-D plane-parallel model \eqref{eq:2d} with rectangular elements.  
The hybrid DG algorithm is applied to the \(2\)-D \(P_N\) equation of the form \eqref{eq:rte_scale_Pn_2D_y_reduced} in the domain $X=[a,b]\times[c,d]$, and can be applied to other models with obvious modifications. 


Let $a=x_{1/2}<x_{3/2}<\cdots < x_{i+{1}/{2}} <\cdots< x_{M_x + {1}/{2}}=b$ and $c=z_{{1}/{2}}<z_{{3}/{2}}<\cdots < z_{j+{1}/{2}}<\cdots < z_{M_z + {1}/{2}}=d$. Denote by $\mathcal{T}^h$ the rectangular partition of $X$, that is
\begin{equation}
\mathcal{T}^h = \left\{K_{i,j}:=I_i\times J_j:=[x_{i-\frac{1}{2}},x_{i+\frac{1}{2}}]\times[z_{j-\frac{1}{2}},z_{j+\frac{1}{2}}]\colon 1\leq i\leq M_x,\,\, 1\leq j \leq M_z\right\}.
\end{equation}
For simplicity, we assume the uniform mesh for each direction $x$ and $z$ and denote by $h_x$ and $h_z$ the length of the intervals on the $x$- and $z$-axis, respectively.

Let $\bu^h_0\in \bV^{h,1}_{0}$ be a $\mathbb{Q}_1$ function in each cell $K_{i,j}$, i.e.,
\begin{equation}\label{eq:expr_sol_2d}
	\bu^h_{0}|_{K_{i,j}} 
 = \overline{\bu}_{0,i,j}
 + \widehat{\bu}_{0,i,j} \xi_i 
 + \widecheck{\bu}_{0,i,j} \eta_j 
 + \widetilde{\bu}_{0,i,j} \xi_i\eta_j,
\end{equation}
where $\xi_i=\frac{2(x-x_i)}{h_x}\in [-1,1]$, $\eta_j=\frac{2(z-z_j)}{h_z}\in [-1,1]$.   For $\ell>0$, let $\bu^h_\ell\in \bV^{h,0}_1$ be a $\mathbb{P}_0$ function in each cell $K_{i,j}$, i.e.,
\begin{equation}
\bu^{h}_\ell|_{K_{i,j}} = \overline{\bu}_{\ell,i,j}, \quad \ell=1,\cdots,N.
\end{equation}
Integration of \eqref{eq:rte_scale_Pn_2D_y_reduced}  over $K_{ij}$ gives 
	\begin{multline}
		\frac{1}{h_xh_z}\int_{J_{j}}\left(\overrightarrow{\bB^{(1)}\bu^h}_{i+\frac{1}{2},j} -\overrightarrow{\bB^{(1)}\bu^h}_{i-\frac{1}{2},j}\right) \ud z \\
  + \frac{1}{h_xh_z}\int_{I_{i}}\left(\overrightarrow{\bB^{(3)}\bu^h}_{i,j+\frac{1}{2}} -\overrightarrow{\bB^{(3)}\bu^h}_{i,j-\frac{1}{2}}\right) \ud x 
		+ \veps\sigma_{\mathrm{a}}\overline{\bu^h}_{i,j} + \left(\frac{\sigma_{\mathrm{t}}}{\veps}-\veps\sigma_{\mathrm{a}}\right)\bR\overline{\bu^h}_{i,j} = \veps \overline{\bbf}_{i,j},
	\end{multline}
Setting $\overrightarrow{\bB^{(1)}\bu^h}_{i+\frac{1}{2},j}
= \bB^{(1)} \avg{\bu^h}_{i+\frac{1}{2},j}-\frac{1}{2}\bD^{(1)}\jmp{\bu^h}_{i+\frac{1}{2},j}$ 
and 
$\overrightarrow{\bB^{(3)}\bu^h}_{i,j+\frac{1}{2}}
= \bB^{(3)} \avg{\bu^h}_{i,j+\frac{1}{2}} -\frac{1}{2}\bD^{(3)} \jmp{\bu^h}_{i,j+\frac{1}{2}}$, where \(\bD^{1} =|\bB|^{1}\) and \(\bD^{3} =|\bB|^{3}\) are positive definite matrices, gives
	\begin{multline}
 \label{eq:rte_dg_mixed_2d}
		\frac{1}{h_xh_z}\int_{J_{j}}\bigg(\Big(\bB^{(1)}\avg{\bu^h}_{i+\frac{1}{2},j} -\frac{1}{2}\bD^{(1)}\jmp{\bu^h}_{i+\frac{1}{2},j}\Big) \\
        - \Big(\bB^{(1)}\avg{\bu^h}_{i-\frac{1}{2},j} + \frac{1}{2}\bD^{(1)}\jmp{\bu^h}_{i-\frac{1}{2},j}\Big)\bigg)\ud z \\
		+ \frac{1}{h_xh_z}\int_{I_{i}}\bigg(\Big(\bB^{(3)}\avg{\bu^h}_{i,j+\frac{1}{2}} -\frac{1}{2}\bD^{(3)}\jmp{\bu^h}_{i,j+\frac{1}{2}}\Big) \\
        - \Big(\bB^{(3)}\avg{\bu^h}_{i,j-\frac{1}{2}} + \frac{1}{2}\bD^{(3)}\jmp{\bu^h}_{i,j-\frac{1}{2}}\Big)\bigg)\ud x \\
		+ \veps\sigma_{\mathrm{a}}\overline{\bu}_{i,j} + \left(\frac{\sigma_{\mathrm{t}}}{\veps}-\veps\sigma_{\mathrm{a}}\right)\bR\overline{\bu}_{i,j} = \veps \overline{\bbf}_{i,j}.
	\end{multline}

The edge values required to define the jumps and averages in \eqref{eq:rte_dg_mixed_2d} are determined using \eqref{eq:expr_sol_2d} for $\ell=0$ and local reconstructions for $\ell >0$.  In the $\ell=0$ case, equations for $\widehat{\bu}_{0,i}$, $\widecheck{\bu}_{0,i}$, $\widetilde{\bu}_{0,i}$ are required.  To derive them, we multiply the $\ell=0$ component of \eqref{added3}  by $\xi_i$, $\mu_i$, and $\mu_i \xi_i$, respectively, and integrate over $K_{i,j}$.  
With the definition of the numerical fluxes above, the results are
\begin{subequations}
	\begin{multline}
		\frac{3}{h_xh_z}\int_{J_{j}} \bigg(\Big(\bB^{(1)}\avg{\bu^h}_{i+\frac{1}{2},j} -\frac{1}{2}\bD^{(1)}\jmp{\bu^h}_{i+\frac{1}{2},j}\Big)_0 \\
		+\Big(\bB^{(1)}\avg{\bu^h}_{i-\frac{1}{2},j} + \frac{1}{2}\bD^{(1)}\jmp{\bu^h}_{i-\frac{1}{2},j}\Big)_0\bigg)\ud z
		-\frac{6}{h_x}(\bB^{(1)}\overline{\bu}_{i,j})_0  \\
		+ \frac{3}{h_xh_z}\int_{I_{i}}\xi_i \bigg(\Big(\bB^{(3)}\avg{\bu^h}_{i,j+\frac{1}{2}} -\frac{1}{2}\bD^{(3)}\jmp{\bu^h}_{i,j+\frac{1}{2}}\Big)_0 \\
		-\Big(\bB^{(2)}\avg{\bu^h}_{i,j-\frac{1}{2}} + \frac{1}{2}\bD^{(3)}\jmp{\bu^h}_{i,j-\frac{1}{2}}\Big)_0\bigg)\ud x
		+ \veps\sigma_{\mathrm{a}} (\widehat{\bu}_{i,j})_0 
		= \veps (\widehat{\bbf}_{i,j})_0,
	\end{multline}
	\begin{multline}
		\frac{3}{h_xh_z}\int_{J_{j}}\eta_j \bigg(\Big(\bB^{(1)}\avg{\bu^h}_{i+\frac{1}{2},j} -\frac{1}{2}\bD^{(1)}\jmp{\bu^h}_{i+\frac{1}{2},j}\Big)_0 \\
		-\Big(\bB^{(1)}\avg{\bu^h}_{i-\frac{1}{2},j} + \frac{1}{2}\bD^{(1)}\jmp{\bu^h}_{i-\frac{1}{2},j}\Big)_0\bigg)\ud z -\frac{6}{h_z}(\bB^{(3)}\overline{\bu}_{i,j})_0
  \\
		+ \frac{3}{h_xh_z}\int_{I_{i}} \bigg(\Big(\bB^{(3)}\avg{\bu^h}_{i,j+\frac{1}{2}} -\frac{1}{2}\bD^{(3)}\jmp{\bu^h}_{i,j+\frac{1}{2}}\Big)_0 \\
		+\Big(\bB^{(2)}\avg{\bu^h}_{i,j-\frac{1}{2}} + \frac{1}{2}\bD^{(3)}\jmp{\bu^h}_{i,j-\frac{1}{2}}\Big)_0\bigg)\ud x
		+ \veps\sigma_{\mathrm{a}} (\widecheck{\bu}_{i,j})_0
		= \veps (\widecheck{\bbf}_{i,j})_0,
	\end{multline}
	\begin{multline}
		\frac{9}{h_xh_z}\int_{J_{j}}\eta_j \bigg(\Big(\bB^{(1)}\avg{\bu^h}_{i+\frac{1}{2},j} -\frac{1}{2}\bD^{(1)}\jmp{\bu^h}_{i+\frac{1}{2},j}\Big)_0 \\
		+\Big(\bB^{(1)}\avg{\bu^h}_{i-\frac{1}{2},j} + \frac{1}{2}\bD^{(1)}\jmp{\bu^h}_{i-\frac{1}{2},j}\Big)_0\bigg) \ud z
		- \frac{18}{(h_x)^2h_z}\int_{K_{i,j}}\eta_j(\bB^{(1)} \bu)_0\ud\bx\\
		+\frac{9}{h_xh_z}\int_{I_{i}}\xi_i \bigg(\Big(\bB^{(3)}\avg{\bu^h}_{i,j+\frac{1}{2}} -\frac{1}{2}\bD^{(1)}\jmp{\bu^h}_{i,j+\frac{1}{2}}\Big)_0 \\
		+\Big(\bB^{(3)}\avg{\bu^h}_{i,j-\frac{1}{2}} + \frac{1}{2}\bD^{(3)}\jmp{\bu^h}_{i,j-\frac{1}{2}}\Big)_0\bigg) \ud x \\
		- \frac{18}{h_x(h_z)^2}\int_{K_{i,j}}\xi_i(\bB^{(3)} \bu)_0\ud\bx
		+ \veps\sigma_{\mathrm{a}} (\widetilde{\bu}_{i,j})_0 
		= \veps (\widetilde{\bbf}_{i,j})_0.
	\end{multline}
\end{subequations}
where, as in the slab geometry case, we often use an outer subscript to extract the $\ell=0$ component of a vector, e.g., $(\bB^{(3)}\overline{\bu}_{i,j})_0 = \bB^{(3)}\overline{\bu}_{0,i,j}$.
Using \eqref{eq:expr_sol_2d} leads to jumps and averages
\begin{subequations}\label{eq:trace_dg_2d}
	\begin{align}
		\jmp{\bu^h_0}_{i+\frac{1}{2},j} &= \overline{\bu}_{0,i+1,j} - \widehat{\bu}_{0,i+1,j} + \widecheck{\bu}_{0,i+1,j} \eta_j - \widetilde{\bu}_{0,i+1,j} \eta_j
        \\
		&{}\hspace{0.5in}
        - \left(\overline{\bu}_{0,i,j} + \widehat{\bu}_{0,i,j} + \widecheck{\bu}_{0,i,j} \eta_j + \widetilde{\bu}_{0,i,j} \eta_j \right),
        \nonumber\\
		\jmp{\bu^h_0}_{i-\frac{1}{2},j} 
        &= -\overline{\bu}_{0,i,j} + \widehat{\bu}_{0,i,j} - \widecheck{\bu}_{0,i,j} \eta_j + \widetilde{\bu}_{0,i,j} \eta_j\\
		&{}\hspace{0.5in} + \overline{\bu}_{0,i-1,j} + \widehat{\bu}_{0,i-1,j} + \widecheck{\bu}_{0,i-1,j} \eta_j + \widetilde{\bu}_{0,i-1,j} \eta_j,
        \nonumber\\
		\avg{\bu^h_0}_{i+\frac{1}{2},j} &= \frac{1}{2}\left(\overline{\bu}_{0,i+1,j} - \widehat{\bu}_{0,i+1,j} + \widecheck{\bu}_{0,i+1,j} \eta_j - \widetilde{\bu}_{0,i+1,j} \eta_j \right)
        \\&{}\hspace{0.5in} 
        + \frac{1}{2}\left(\overline{\bu}_{0,i,j} + \widehat{\bu}_{0,i,j} + \widecheck{\bu}_{0,i,j} \eta_j + \widetilde{\bu}_{0,i,j} \eta_j \right),
        \nonumber\\
		\jmp{\bu^h_0}_{i,j+\frac{1}{2}} &= \overline{\bu}_{0,i,j+1} + \widehat{\bu}_{0,i,j+1}\xi_i - \widecheck{\bu}_{0,i,j+1}  - \widetilde{\bu}_{0,i,j+1} \xi_i\\
		&{}\hspace{0.5in}- \left(\overline{\bu}_{0,i,j} + \widehat{\bu}_{0,i,j}\xi_i + \widecheck{\bu}_{0,i,j} + \widetilde{\bu}_{0,i,j} \xi_i \right),
        \nonumber\\
		\jmp{\bu^h}_{0,i,j-\frac{1}{2}}
        &= -\overline{\bu}_{0,i,j} - \widehat{\bu}_{0,i,j}\xi_i + \widecheck{\bu}_{0,i,j}  + \widetilde{\bu}_{0,i,j} \xi_i \\
		&{}\hspace{0.5in} + \left(\overline{\bu}_{0,i,j-1} + \widehat{\bu}_{0,i,j-1} \xi_i  + \widecheck{\bu}_{0,i,j-1} + \widetilde{\bu}_{0,i,j-1} \xi_i \right),
        \nonumber\\
		\avg{\bu^h_0}_{i,j+\frac{1}{2}} 
        &= \frac{1}{2}(\overline{\bu}_{0,i,j+1} + \widehat{\bu}_{0,i,j+1}\xi_i - \widecheck{\bu}_{0,i,j+1}  - \widetilde{\bu}_{0,i,j+1} \xi_i\big)\\
		&{}\hspace{0.5in} + \frac{1}{2}\left(\overline{\bu}_{0,i,j} + \widehat{\bu}_{0,i,j}\xi_i + \widecheck{\bu}_{0,i,j} + \widetilde{\bu}_{0,i,j} \xi_i \right).
  \nonumber
	\end{align}
\end{subequations}

For $\ell>0$, we only know the cell average value of $\bu_\ell^h$. As in the slab geometry case, we employ Fromm's method to approximate slopes with center differences to find edge values and then apply upwind fluxes at cell interfaces. Let us assume a uniform mesh size $h$. Applying Fromm's method to $\bu_\ell$ on each cell gives  
\begin{equation}
\bu^h_\ell= \overline{\bu}_{\ell,i,j} + \frac{\overline{\bu}_{\ell,i+1,j} - \overline{\bu}_{\ell,i-1,j}}{4}\xi_i + \frac{\overline{\bu}_{\ell,i,j+1} - \overline{\bu}_{\ell,i,j-1}}{4}\eta_j.
\end{equation}
Therefore the edge values for $\ell>0$ are
 \begin{subequations}\label{eq:fromm_2d_1}
\begin{align}
	\bu^-_{\ell,i+\frac{1}{2},j} 
    &= \overline{\bu}_{\ell,i,j} + \frac{\overline{\bu}_{\ell,i+1,j} - \overline{\bu}_{\ell,i-1,j}}{4} + \frac{\overline{\bu}_{i,j+1} - \overline{\bu}_{\ell,i,j-1}}{4}\eta_j,
    \\
	\bu^+_{\ell,i+\frac{1}{2},j} 
     &= \overline{\bu}_{\ell,i+1,j} - \frac{\overline{\bu}_{\ell,i+2,j} - \overline{\bu}_{\ell,i,j}}{4} 
	+ \frac{\overline{\bu}_{\ell,i+1,j+1} - \overline{\bu}_{\ell,i+1,j-1}}{4}\eta_j,\\
	\bu^-_{\ell,i,j+\frac{1}{2}} 
    &= \overline{\bu}_{\ell,i,j} + \frac{\overline{\bu}_{\ell,i+1,j} - \overline{\bu}_{\ell,i-1,j}}{4}\xi_i + \frac{\overline{\bu}_{\ell,i,j+1} - \overline{\bu}_{\ell,i,j-1}}{4},\\
	\bu^+_{\ell,i,j+\frac{1}{2}} &= \overline{\bu}_{\ell,i,j+1} + \frac{\overline{\bu}_{\ell,i+1,j+1} - \overline{\bu}_{\ell,i-1,j+1}}{4}\xi_i 
    - \frac{\overline{\bu}_{\ell,i,j+2} - \overline{\bu}_{\ell,i,j}}{4},
\end{align}
\end{subequations}
and the average and jump values for $\ell>0$ are
 \begin{subequations}\label{eq:fromm_2d}
 	\begin{align}
		\jmp{\bu^h_\ell}_{i+\frac{1}{2},j} &= \frac{-\overline{\bu}_{\ell,i+2,j} + 3\overline{\bu}_{i+1,j} - 3\overline{\bu}_{i,j} + \overline{\bu}_{\ell,i-1,j}}{4} \nonumber\\
		&{}\qquad +\frac{\overline{\bu}_{\ell,i+1,j+1} - \overline{\bu}_{\ell,i,j+1} - \overline{\bu}_{\ell,i+1,j-1} +\overline{\bu}_{\ell,i,j-1}}{4}\eta_j,\\
		\jmp{\bu^h_\ell}_{i-\frac{1}{2},j} &= \frac{ \overline{\bu}_{\ell,i+1,j} - 3\overline{\bu}_{\ell,i,j} + 3\overline{\bu}_{\ell,i-1,j} - \overline{\bu}_{\ell,i-2,j}}{4} \nonumber\\
		&{}\qquad -\frac{\overline{\bu}_{\ell,i,j+1} - \overline{\bu}_{\ell,i-1,j+1} - \overline{\bu}_{\ell,i,j-1} +\overline{\bu}_{\ell,i-1,j-1}}{4}\eta_j,\\
		\avg{\bu^h_\ell}_{i+\frac{1}{2},j} &= \frac{-\overline{\bu}_{\ell,i+2,j} + 5\overline{\bu}_{\ell,i+1,j} + 5\overline{\bu}_{\ell,i,j} - \overline{\bu}_{\ell,i-1,j}}{8} \nonumber\\
		&{}\qquad +\frac{\overline{\bu}_{\ell,i+1,j+1} + \overline{\bu}_{\ell,i,j+1} - \overline{\bu}_{\ell,i+1,j-1} - \overline{\bu}_{\ell,i,j-1}}{8}\eta_j,\\
		\jmp{\bu^h_\ell}_{i,j+\frac{1}{2}} &= \frac{-\overline{\bu}_{\ell,i,j+2} + 3\overline{\bu}_{\ell,i,j+1} - 3\overline{\bu}_{\ell,i,j} + \overline{\bu}_{\ell,i,j-1}}{4} \nonumber\\
		&{}\qquad +\frac{\overline{\bu}_{\ell,i+1,j+1} - \overline{\bu}_{\ell,i+1,j} - \overline{\bu}_{i-1,j+1} +\overline{\bu}_{\ell,i-1,j}}{4}\xi_i,\\
		\jmp{\bu^h_\ell}_{i,j-\frac{1}{2}} &= \frac{ \overline{\bu}_{\ell,i,j+1} - 3\overline{\bu}_{\ell,i,j} + 3\overline{\bu}_{\ell,i,j-1} - \overline{\bu}_{\ell,i,j-2}}{4} \nonumber\\
		&{}\qquad -\frac{\overline{\bu}_{i+1,j} - \overline{\bu}_{\ell,i+1,j-1} - \overline{\bu}_{\ell,i-1,j} +\overline{\bu}_{\ell,i-1,j-1}}{4}\xi_i,\\
		\avg{\bu^h_\ell}_{i,j+\frac{1}{2}} &= \frac{-\overline{\bu}_{\ell,i,j+2} + 5\overline{\bu}_{\ell,i,j+1} + 5\overline{\bu}_{\ell,i,j} - \overline{\bu}_{\ell,i,j-1}}{8} \nonumber\\
		&{}\qquad +\frac{\overline{\bu}_{\ell,i+1,j+1} + \overline{\bu}_{\ell,i+1,j} - \overline{\bu}_{\ell,i-1,j+1} - \overline{\bu}_{\ell,i-1,j}}{8}\xi_i.
 	\end{align}
 \end{subequations}
Together \cref{eq:rte_dg_mixed_2d,eq:trace_dg_2d,eq:fromm_2d}, comprise the mixed spherical harmonics DG method in the 2-D geometry setting.

\ifx\SIAM\TRUE
\bibliographystyle{siamplain}
\else
\bibliographystyle{amsplain}
\fi
\bibliography{refs}

\end{document}